\newtheorem{thm}{Theorem}[section]
\newtheorem{prop}[thm]{Proposition}
\newtheorem{lemma}[thm]{Lemma}
\newtheorem{cor}[thm]{Corollary}
\theoremstyle{remark}
\newtheorem{rem}[thm]{Remark}
\renewcommand{\le}{\leqslant}
\renewcommand{\ge}{\geqslant}
\renewcommand{\leq}{\leqslant}
\renewcommand{\geq}{\geqslant}
\newcommand{\Ll}{\left}
\newcommand{\Rr}{\right}
\renewcommand{\subset}{\subseteq}
\newcommand{\mcl}{\mathcal}
\newcommand{\msf}{\mathsf}
\newcommand{\mfk}{\mathfrak}
\newcommand{\E}{\mathbb{E}}
\newcommand{\EE}{\mathbf{E}}
\newcommand{\N}{\mathbb{N}}
\newcommand{\1}{\mathbf{1}}
\newcommand{\R}{\mathbb{R}}
\newcommand {\IR}{\mathbb{R}}
\newcommand{\Z}{\mathbb{Z}}
\newcommand {\IZ}{\mathbb{Z}}
\renewcommand{\P}{\mathbb{P}}
\newcommand{\PP}{\mathbf{P}}
\newcommand{\ov}{\overline}
\newcommand{\td}{\tilde}
\newcommand{\eps}{\varepsilon}
\renewcommand{\epsilon}{\varepsilon}
\def\d{{\mathrm{d}}}
\renewcommand{\phi}{\varphi}
\renewcommand{\b}{_\lambda}
\renewcommand{\emptyset}{\varnothing}
\newcommand{\un}{\underline}
\numberwithin{equation}{section}
\title[Lyapunov exponents of random walks in small random potential]{Lyapunov exponents of random walks in small random potential: the upper bound}
\author{Thomas Mountford, Jean-Christophe Mourrat}
\address[Thomas Mountford]{Ecole polytechnique fédérale de Lausanne, Institut de mathé\-ma\-tiques, station 8, 1015 Lausanne, Switzerland}
\address[Jean-Christophe Mourrat]{Ecole polytechnique f\'ed\'erale de Lausanne, Institut de math\'ematiques, station 8, 1015 Lausanne, Switzerland \& ENS Lyon, CNRS, 46 allée d'Italie, 69007 Lyon, France}
\begin{document}
\begin{abstract}
We consider the simple random walk on $\Z^d$ evolving in a random i.i.d.\ potential taking values in $[0,+\infty)$. The potential is not assumed integrable, and can be rescaled by a multiplicative factor $\lambda > 0$. Completing the work started in a companion paper, we give the asymptotic behaviour of the Lyapunov exponents for $d \ge 3$, both annealed and quenched, as the scale parameter $\lambda $ tends to zero.

\bigskip

\noindent \textsc{MSC 2010:} 82B44, 82D30, 60K37.

\medskip

\noindent \textsc{Keywords:} Lyapunov exponents, random walk in random potential, Anderson model.
\end{abstract}

\maketitle

%
%
%
%
%%%%%%%%%%%%%%%%%%%%%%%%%%%%%%%%%%%%%%%%%%%%%%%%%%%%%%%%%%%%%%
%%%%%%%%%%%%%%%%%%%%%%%%%%%%%%%%%%%%%%%%%%%%%%%%%%%%%%%%%%%%%%
%
%
%
\section{Introduction}

Let $(X_n)_{n \in \N}$ be the simple random walk in $\Z^d$, whose law starting from $x$ we write $\PP_x$ (with expectation $\EE_x$). Independently of the random walk, we give ourselves a random potential $V = (V(x))_{x \in \IZ^{d}}$, which is a family of i.i.d.\ random variables taking values in $[0,+\infty)$.  We write $\P = \mu^{\otimes \IZ^d}$ for the law of this family, with associated expectation $\E$.
Let $\ell \in \IR^d$ be a vector of unit Euclidean norm, and
$$
\msf{T}_n(\ell) = \inf \Ll\{ k : X_k \cdot \ell \ge n \Rr\}
$$ 
be the first time at which the random walk crosses the hyperplane orthogonal to~$\ell$ lying at distance $n$ from the origin. For every $\lambda > 0$, we define the \emph{quenched} and \emph{annealed} point-to-hyperplane Lyapunov exponents associated with the direction $\ell$ and the potential $\lambda V$ by, respectively,
\begin{equation}
\label{defalpha}
{\alpha}_\lambda(\ell) = \lim_{n \to + \infty} - \frac{1}{n} \ \log \EE_0 \Ll[ \exp\Ll( -\sum_{k = 0}^{\msf{T}_n(\ell)-1} \lambda V(X_k) \Rr) %\ \1_{\{T_n(\ell) < + \infty\}} 
\Rr],
\end{equation}
\begin{equation}
\label{defalphabar}
\ov{\alpha}_\lambda(\ell) = \lim_{n \to + \infty} - \frac{1}{n} \ \log \E\EE_0\Ll[ \exp\Ll( -\sum_{k = 0}^{\msf{T}_n(\ell)-1} \lambda V(X_k) \Rr) %\ \1_{\{T_n(\ell) < + \infty\}}
\Rr].
\end{equation}
The first limit holds almost surely, and is deterministic (see \cite{shape} for the existence of the first limit, and \cite{flu1} for the second one).

Our goal is to complete the proof of the following result.
\begin{thm}
\label{t:main}
If $d \ge 3$, then as $\lambda$ tends to $0$,
\begin{equation}
\label{e:main}
\alpha_\lambda(\ell) \sim \ov{\alpha}_\lambda(\ell) \sim \Ll(2d \, \int q_d \ \frac{ 1-e^{- \lambda z} }{1-(1- q_d) e^{- \lambda z}} \ \d \mu(z) \Rr)^{1/2},
\end{equation}
where $q_d$ is the probability that the simple random walk never returns to its starting point, and $a_\lambda \sim b_\lambda$ stands for $a_\lambda/b_\lambda \to 1$.
\end{thm}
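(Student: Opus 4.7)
By Jensen's inequality applied to $-\log$, we have $\ov{\alpha}_\lambda(\ell) \le \alpha_\lambda(\ell)$, so \eqref{e:main} reduces to matching one-sided bounds. I plan to rely on the companion paper for the lower bound on $\ov{\alpha}_\lambda(\ell)$ and focus here on the upper bound on $\alpha_\lambda(\ell)$. My strategy has two steps: first an upper bound on the annealed exponent via a drifted-walk (tilting) construction, and then a transfer to the quenched exponent via concentration of measure.

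For the annealed step, Fubini gives
$$\E\EE_0\Ll[\exp\Ll(-\sum_k \lambda V(X_k)\Rr)\Rr] = \EE_0\Ll[\exp\Ll(-\sum_x \chi(\lambda L_x)\Rr)\Rr],$$
where $\chi(u) = -\log \int e^{-uz}\,\d\mu(z)$ and $L_x$ is the occupation time of $x$ before $\msf{T}_n(\ell)$. I would tilt the walk by a drift $v$ in direction $\ell$, paying a Cramér cost $\sim e^{-T\Lambda^*(v)}$ with $\Lambda^*(v) \sim dv^2/2$ and $T \sim n/v$. Under the tilt, for small $v$ the range is $\sim q_d T$ and the occupation time at each visited site is approximately $\mathrm{Geom}(q_d)$. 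Combining $\chi(u) \sim 1 - \int e^{-uz}\,\d\mu(z)$ (valid for small $\lambda$) with the summation identity
$$q_d \sum_{k \ge 1} (1-q_d)^{k-1}\Ll(1-e^{-\lambda z k}\Rr) = q_d\, \frac{1-e^{-\lambda z}}{1-(1-q_d)e^{-\lambda z}},$$
one obtains $\EE_0^v[\sum_x \chi(\lambda L_x)] \sim T \cdot P_\lambda$, where $P_\lambda := \int q_d (1-e^{-\lambda z})/(1-(1-q_d)e^{-\lambda z})\,\d\mu(z)$. Minimising $(\Lambda^*(v) + P_\lambda)/v$ in $v$ gives $v^* = \sqrt{2P_\lambda/d}$ and the annealed upper bound $\ov{\alpha}_\lambda(\ell) \le (1+o(1))\sqrt{2d\, P_\lambda}$, which matches \eqref{e:main}.

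For the quenched step, I would bound the variance of $\log \EE_0[\exp(-\sum_k \lambda V(X_k))]$ as a functional of the i.i.d.\ environment, via Efron--Stein applied after truncating $V$ at some $\lambda$-dependent level $M$. Since $\partial_{V(x)} \log \EE_0 = O(\lambda L_x)$ and $\sum_x L_x^2 = O(T)$ for transient walks, the resulting variance is controlled by $\lambda^2 M^2 T$, which can be arranged to be $o((\sqrt{P_\lambda}\,n)^2)$ for a suitable choice $M = M(\lambda)$. This yields $\alpha_\lambda(\ell) \sim \ov{\alpha}_\lambda(\ell)$.

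The main obstacle is the rigorous control of $\sum_x \chi(\lambda L_x)$ under the tilt: the heuristics ``range $\sim q_d T$'' and ``local times approximately i.i.d.\ $\mathrm{Geom}(q_d)$'' must be made quantitative with errors subdominant to $P_\lambda T$, which is itself small in $\lambda$. A further complication is the non-integrability of $V$: $\chi$ may be irregular near infinity, and I expect to handle this by truncating $V$ at a $\lambda$-dependent level, proving the bound for the truncated potential, and then passing to the limit by a monotonicity argument. The annealed-to-quenched transfer is also delicate without integrability and likely requires the same truncation to apply Efron--Stein.
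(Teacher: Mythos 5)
Your overall frame agrees with the paper up to the key step: Jensen gives $\ov{\alpha}_\lambda(\ell) \le \alpha_\lambda(\ell)$, and the matching lower bound $\liminf \ov{\alpha}_\lambda(\ell)/\sqrt{2d\,\mfk{I}_\lambda} \ge 1$ is imported from the companion paper, so everything reduces to the upper bound $\limsup \alpha_\lambda(\ell)/\sqrt{2d\,\mfk{I}_\lambda} \le 1$ on the \emph{quenched} exponent. Where you diverge is in how to get that quenched upper bound: you propose (a) an annealed upper bound by tilting, then (b) an annealed-to-quenched transfer by Efron--Stein. The paper instead proves the quenched upper bound directly, by exhibiting for a typical environment an explicit low-cost strategy (a coarse-grained corridor of $(L_\lambda,\delta_1)$-good boxes glued by oriented percolation), which makes step (a) unnecessary --- the annealed upper bound then follows for free from $\ov{\alpha}_\lambda \le \alpha_\lambda$.

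The genuine gap is in step (b). An Efron--Stein variance bound controls the fluctuations of $\log Z_n$ around its \emph{own} mean $\E \log Z_n$; it says nothing about the Jensen gap $\log \E Z_n - \E \log Z_n$, and it is precisely this gap that separates $\alpha_\lambda$ from $\ov{\alpha}_\lambda$ (indeed the two exponents genuinely differ for fixed $\lambda$ in parts of the phase diagram, even though $\log Z_n$ always concentrates at scale $o(n)$). To convert concentration into $\alpha_\lambda \le \ov{\alpha}_\lambda + o(\sqrt{\mfk{I}_\lambda})$ you would need a sub-Gaussian (exponential-moment) inequality with variance proxy $v_n$ satisfying $v_n/n = o(\sqrt{\mfk{I}_\lambda})$ --- a precision that \emph{vanishes} per unit length --- and then integrate it against $e^{\log Z_n}$. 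With $\E[V]=+\infty$ this forces a truncation, and the truncation is doubly problematic: replacing $V$ by $V\wedge M$ \emph{decreases} the exponent, so the monotonicity runs the wrong way for an upper bound on $\alpha_\lambda$ (contrast with the paper's Lemma~\ref{lemconst}, which modifies the \emph{small} values and increases the exponent); and to keep $\mfk{I}_\lambda$ essentially unchanged one must take $M \ge \eps/\lambda$, whence $\lambda^2 M^2 \ge \eps^2$, which is not $o(\mfk{I}_\lambda)$ since $\mfk{I}_\lambda \to 0$. So the transfer fails exactly in the regime \eqref{assumption1} where the important points carry a non-negligible share of $\mfk{I}_\lambda$ --- the case that occupies Sections 3--4 of the paper. (A smaller point: your displayed summation identity double-counts $q_d$; the left-hand side equals $(1-e^{-\lambda z})/(1-(1-q_d)e^{-\lambda z})$, and the extra factor $q_d$ in $f$ comes from the density $q_d T$ of distinct visited sites, which you also invoke separately.) To repair the argument you would have to replace (b) by a direct quenched construction, which is what the paper does.
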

We refer to \cite{shape1} for a detailed review of previous results, for motivations and for a heuristic explanation of this formula. 
We define $f : \R_+ \to \R$ by
\begin{equation}
\label{e:deff}
f(z) = q_d \ \frac{ 1-e^{-  z} }{1-(1- q_d) e^{-  z}},
\end{equation}
so that the integral appearing in the right-hand side of \eqref{e:main} can be rewritten as
\begin{equation}
\label{defI}
\mfk{I}_\lambda =  \int f(\lambda z) \ \d\mu (z).
\end{equation}
It was shown in \cite{kmz} that if $\E[V] < + \infty$ (we use $\E[V]$ as shorthand for $\E[V(0)]$), then as $\lambda$ tends to $0$,
$$\alpha_\lambda(\ell) \sim\ov{\alpha}_\lambda(\ell) \sim \Ll(2d \, \lambda \, \E[V]\Rr)^{1/2}.
$$
Since the function $f$ is concave and $f(z) \sim z$ as $z$ tends to $0$, this is consistent with Theorem~\ref{t:main}. For $\E[V] = +\infty$ and $d \ge 3$, it was shown in \cite{shape1} that
\begin{equation}
\label{l:shape1}
\liminf_{\lambda \to 0} \frac{\ov{\alpha}_\lambda(\ell) }{\sqrt{2d \, \mfk{I}_\lambda} }  \ge 1.
\end{equation}
A standard convexity argument ensures that $\ov{\alpha}_\lambda(\ell) \le \alpha_\lambda(\ell)$. As a consequence, in order to prove Theorem~\ref{t:main}, it suffices to show the following.
\begin{thm}
\label{t:main2}
If $d \ge 3$ and $\E[V] = +\infty$, then 
$$
\limsup_{\lambda \to 0} \frac{{\alpha}_\lambda(\ell) }{\sqrt{2d \, \mfk{I}_\lambda} }  \le 1.
$$
\end{thm}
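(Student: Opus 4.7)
The bound reduces to showing that, for $\P$-a.e.\ $V$ and all sufficiently large $n$,
$$
Z_n(V):=\EE_0\Bigl[\exp\Bigl(-\sum_{k=0}^{\msf{T}_n(\ell)-1}\lambda V(X_k)\Bigr)\Bigr]\ \ge\ \exp\bigl(-n\sqrt{2d\,\mfk{I}_\lambda}\,(1+o_\lambda(1))\bigr).
$$
My plan is a slab decomposition at a mesoscopic length $L=L_\lambda\to\infty$ in the $\ell$-direction, inside a tube of transverse radius $W=W_\lambda\to\infty$. Choose aligned lattice entry points $y_j\approx jL\ell$, and let $Z^{\mathrm{slab}}_j(V)$ be the walk-expectation of $e^{-\lambda\sum V}$ over paths that cross the $j$-th slab inside the tube, from $y_{j-1}$ to $y_j$, while avoiding the sites where $V$ exceeds a truncation level $M=M_\lambda\to\infty$. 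Applying the strong Markov property at the successive hitting times of the $y_j$'s gives
$$
Z_n(V)\ \ge\ \prod_{j=1}^{\lfloor n/L\rfloor} Z^{\mathrm{slab}}_j(V),
$$
and the factors are i.i.d.\ by translation invariance of $V$ and the periodic construction. The pointwise ergodic theorem then yields $\alpha_\lambda\le -L^{-1}\E[\log Z^{\mathrm{slab}}_1]+o_\lambda(1)$.

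\smallskip

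\noindent It thus suffices to show $-L^{-1}\E[\log Z^{\mathrm{slab}}_1]\le\sqrt{2d\,\mfk{I}_\lambda}(1+o_\lambda(1))$, which I would split as the \emph{annealed slab bound} $-L^{-1}\log\E[Z^{\mathrm{slab}}_1]\le\sqrt{2d\,\mfk{I}_\lambda}(1+o_\lambda(1))$, plus the \emph{Jensen gap} $L^{-1}\bigl(\log\E[Z^{\mathrm{slab}}_1]-\E[\log Z^{\mathrm{slab}}_1]\bigr)=o_\lambda(\sqrt{\mfk{I}_\lambda})$. For the annealed slab bound I would take as test strategy a tilted walk traversing the slab in time $T^*\approx L\sqrt{d/(2\mfk{I}_\lambda)}$; the large-deviation cost $e^{-dL^2/(2T^*)}$ of forcing the crossing combines with the annealed potential contribution $(1-\mfk{I}_\lambda/q_d)^{T^*q_d}\approx e^{-T^*\mfk{I}_\lambda}$, obtained because each of the $\approx T^*q_d$ sites visited by a transient walk in $d\ge 3$ carries an almost-geometric local time of parameter $q_d$, so that the per-visited-site annealed factor equals $\E_V[q_de^{-\lambda V}/(1-(1-q_d)e^{-\lambda V})]=1-\mfk{I}_\lambda/q_d$. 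Optimising over $T^*$ gives the target exponent $L\sqrt{2d\,\mfk{I}_\lambda}$. The tube width $W\gg\sqrt{T^*}$ accommodates the transverse fluctuations of this walk, and $\P(V>M)\cdot T^*\to 0$ ensures that avoiding $\{V>M\}$ changes $\mfk{I}_\lambda$ only by $o(\mfk{I}_\lambda)$, since detours around isolated bad sites cost only $O(1)$ in the annealed partition function when $d\ge 3$.

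\smallskip

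\noindent The main obstacle is the Jensen gap, i.e.\ controlling the fluctuations of $\log Z^{\mathrm{slab}}_1(V)$ around its mean when $V$ is not integrable. On the truncated ensemble, $\log Z^{\mathrm{slab}}_1$ is a bounded-difference functional of the $O(LW^{d-1})$ independent variables $V(x)$ in the slab (the effective per-coordinate sensitivity being $O(\lambda M)$ for the values $V(x)\wedge M$, plus an $O(1)$ detour cost contributing when $V(x)$ crosses the threshold $M$), so Efron-Stein/McDiarmid yields $\mathrm{Var}(\log Z^{\mathrm{slab}}_1)\lesssim (\lambda M)^2 LW^{d-1}+LW^{d-1}\P(V>M)$, which in turn controls the Jensen gap by a standard second-moment estimate. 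The delicate step is the simultaneous calibration of $L_\lambda, W_\lambda, M_\lambda$ so that (i) the resulting Jensen gap divided by $L$ is $o(\sqrt{\mfk{I}_\lambda})$, (ii) the tube accommodates the tilted walk ($W\gg\sqrt{T^*}$) but $\log W\ll L\sqrt{\mfk{I}_\lambda}$, (iii) the forbidden sites are sparse enough that detouring around them costs only $e^{o(L\sqrt{\mfk{I}_\lambda})}$ in $d\ge 3$, and (iv) the truncation preserves $\mfk{I}_\lambda$ to leading order. Carrying out this balance under the sole hypothesis $\E[V]=+\infty$—which pins down only a slow, tail-dependent decay of $\mfk{I}_\lambda\to 0$—is the heart of the argument.
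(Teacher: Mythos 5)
Your reduction to a product of i.i.d.\ slab partition functions and your annealed computation of a single slab (tilted walk, entropy cost $e^{-dL^2/2T^*}$, geometric local times giving the per-visited-site factor $1-\mfk{I}_\lambda/q_d$, optimisation over $T^*$) reproduce the correct heuristic, and the first part is a legitimate way to bound $\alpha_\lambda$ by $-L^{-1}\E[\log Z^{\mathrm{slab}}_1]$. The genuine gap is the Jensen-gap step, and I do not believe it can be closed by the concentration argument you sketch. The dominant fluctuations of $\log Z^{\mathrm{slab}}_1$ do not come from the bulk values $V(x)\wedge M$ (sensitivity $O(\lambda M)$) but from the \emph{important points}, i.e.\ sites with $V(x)\gtrsim 1/\lambda$: resampling such a site changes $\log Z^{\mathrm{slab}}_1$ by an amount of order the visiting probability of that site (up to a constant for sites near the tube axis), and their number in the slab is Poissonian with mean of order $\P[V\ge 1/\lambda]\,T^*W^{d-1}/W^{d-1}\cdot(\dots)\approx \mfk{I}_\lambda T^*\approx L\sqrt{\mfk{I}_\lambda}$, i.e.\ of the same order as the target exponent itself. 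A bounded-differences bound on the exponential moment then gives a Jensen gap of order $\sum_x(\sup\Delta_x)^2\gtrsim T^*$, which after division by $L$ is $\gg\sqrt{\mfk{I}_\lambda}$; the refined Efron--Stein variance bound $\mathrm{Var}\lesssim \mfk{I}_\lambda T^*$ controls the standard deviation but not $\log\E[Z^{\mathrm{slab}}_1]-\E[\log Z^{\mathrm{slab}}_1]$ to the required precision $o(L\sqrt{\mfk{I}_\lambda})$. Moreover your escape route --- truncating at $M=M_\lambda$ with $\lambda M\to0$ and $\P[V>M]\,T^*\to0$ --- is not available under the sole hypothesis $\E[V]=+\infty$: since $\mfk{I}_\lambda\ge f(\lambda M)\P[V>M]\approx \lambda M\,\P[V>M]$, one can build atomic laws $\mu=\sum p_k\delta_{a_k}$ (with $a_k$ growing fast and $p_ka_k\to\infty$) for which, at $\lambda=1/a_k$, any $M<1/\lambda$ leaves $\P[V>M]\ge p_k\asymp\mfk{I}_\lambda$, so the forbidden set is not sparse on the scale $T^*\approx\mfk{I}_\lambda^{-1/2}L\gg\mfk{I}_\lambda^{-1}$, while any $M\ge1/\lambda$ makes $\lambda M\ge1$ and destroys your variance bound. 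In short, establishing that the quenched and annealed slab costs agree to leading order is exactly the hard content of the theorem, and cannot be dispatched as a ``standard second-moment estimate.''

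This is precisely why the paper takes a different, fully quenched route: it truncates the potential from \emph{below} (replacing values $<\eps/\lambda$ by their conditional mean, which only increases the exponent, Lemma~\ref{lemconst}), proves a law of large numbers for the counts of important points in mesoscopic boxes of side $\delta_1L\b$ (Lemma~\ref{goodenv}), and then lower-bounds the crossing cost of a block \emph{deterministically given that the environment is good}, using the Green-function and geometric-local-time estimates (Lemmas~\ref{har} and~\ref{geom}, Proposition~\ref{p11}); the blocks are finally chained by an oriented-percolation argument. No comparison between $\E[\log Z]$ and $\log\E[Z]$ is ever needed. If you want to salvage your scheme, you would have to replace the concentration step by such a quenched control of the important points in a typical slab.
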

Our task here is thus to show Theorem~\ref{t:main2}. Its proof is simpler than that of the converse bound in \eqref{l:shape1}.  Indeed, instead of having to consider all possible combinations of paths and environments, we must simply, given a typical environment, construct a scenario whose probability is appropriately bounded from below and for which the walk travels to the distant hyperplane.  As a first step, we use the following observation, due to \cite{bk,zer}.

%In order to prove Theorem~\ref{t:main2}, it is sufficient to show that for every $\eps > 0$,  $\forall \epsilon >0 \hspace{0.3cm} \forall x \hspace{0.3cm} \beta (x, \lambda) \leq (1 + K\epsilon) |x| \sqrt{2d I (V, \lambda)}$ for $\lambda$ small enough for some universal $K$. 

\begin{lemma} 
\label{lemconst}
For every $\eps > 0$, let $\td{V}_\eps = (\td{V}_\eps(x))_{x \in \Z^d}$ be the potential defined by
$$
\tilde{V}_{\epsilon}(x) =
\left|
\begin{array}{ll}
V(x) & \text{if}\hspace{0.3cm} V(x) \geq \frac{\epsilon}{\lambda},\\
\E \Ll[ V(0) \ \vert \ V(0) < \frac{\epsilon}{\lambda}\Rr] &  \text{if}\hspace{0.3cm} V(x) < \frac{\epsilon}{\lambda},
\end{array}
\right.
$$
and $\alpha_{\eps,\lambda}(\ell)$ be the quenched Lyapunov exponent associated with the potential $\td{V}_\eps$. We have $\alpha_{\eps,\lambda}(\ell) \ge \alpha_{\lambda}(\ell)$.
\end{lemma}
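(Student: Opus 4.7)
The plan is to treat $V \mapsto -\log g_V(n)$ as a concave function of the environment, where
\[
g_V(n) := \EE_0\Ll[\exp\Ll(-\lambda \sum_{k=0}^{\msf{T}_n(\ell)-1} V(X_k)\Rr)\Rr],
\]
and then to apply Jensen's inequality after conditioning on the ``good'' part of the environment. The concavity is immediate: for any fixed realization of the walk, $V \mapsto \exp(-\lambda \sum_k V(X_k))$ is log-linear in $V \in [0,+\infty)^{\Z^d}$, so $g_V(n)$ is a nonnegative mixture of log-linear functions; the logarithm of such a mixture is convex in $V$ by H\"older's inequality, so $V \mapsto -\log g_V(n)$ is concave.

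Next I would introduce the sub-sigma-algebra $\mcl{G}$ generated by the indicator field $(\1_{V(x) \ge \eps/\lambda})_{x \in \Z^d}$ together with the values of $V(x)$ at the sites where $V(x) \ge \eps/\lambda$. Conditionally on $\mcl{G}$, the remaining variables $V(x)$ with $V(x) < \eps/\lambda$ are independent of each other and of the walk, with common conditional mean $\E[V(0) \mid V(0) < \eps/\lambda]$. By the very definition of $\td V_\eps$ we have $\E[V \mid \mcl{G}] = \td V_\eps$ almost surely, and Jensen's inequality for the concave function $V \mapsto -\log g_V(n)$ applied to the conditional law of $V$ given $\mcl{G}$ yields
\[
\E\Ll[ -\log g_V(n) \;\big|\; \mcl{G} \Rr] \le -\log g_{\td V_\eps}(n) \quad \text{a.s.}
\]

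Finally I would let $n \to \infty$. The right-hand side divided by $n$ converges almost surely to $\alpha_{\eps,\lambda}(\ell)$ by definition of the quenched Lyapunov exponent associated with $\td V_\eps$. For the left-hand side, since $V \ge 0$ forces $g_V(n) \le 1$ and hence $-\log g_V(n) \ge 0$, the conditional Fatou lemma gives
\[
\liminf_n \frac{1}{n}\E\Ll[-\log g_V(n) \;\big|\; \mcl{G}\Rr] \ge \E\Ll[ \liminf_n \frac{-\log g_V(n)}{n} \;\Big|\; \mcl{G}\Rr] = \alpha_\lambda(\ell) \quad \text{a.s.,}
\]
using the defining almost sure convergence $-\frac{1}{n}\log g_V(n) \to \alpha_\lambda(\ell)$. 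Chaining the two displays yields $\alpha_\lambda(\ell) \le \alpha_{\eps,\lambda}(\ell)$ almost surely, and since both exponents are deterministic the inequality holds outright. I do not foresee any genuine obstacle: the nonnegativity of $V$ makes the one-sided Fatou bound above available, so no uniform integrability argument for $-\frac{1}{n}\log g_V(n)$ is needed, and the rest of the scheme reduces to elementary convexity.
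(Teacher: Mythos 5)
Your proof is correct, and its core is the same as in the paper, which simply defers to the last statement of Zerner's Proposition~4: the functional $V \mapsto -\log g_V(n)$ is concave (minus the log of a positive mixture of log-linear functions of the field), so replacing the sub-threshold values of $V$ by their conditional mean $\E[V\mid\mcl G]=\td V_\eps$ can only increase it, by Jensen. Where you genuinely diverge is in the passage to the limit $n\to\infty$. The paper takes full expectations and invokes the fact that the subadditive convergence $-\frac1n\log g_V(n)\to\alpha_\lambda(\ell)$ holds in $L^1$ (this is the role of the citation of (6.12) in \cite{shape}); you instead keep Jensen conditional on $\mcl G$, so that the right-hand side $-\log g_{\td V_\eps}(n)$ is $\mcl G$-measurable, needs no averaging, and converges almost surely after division by $n$ by the very definition of $\alpha_{\eps,\lambda}(\ell)$, while the left-hand side only requires the one-sided conditional Fatou lemma, available because $V\ge0$ forces $-\log g_V(n)\ge0$ and because the inequality you need points the right way. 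This makes the argument self-contained and independent of the $L^1$ statement, which is a real (if modest) gain. The one step you assert without justification is Jensen's inequality for a concave function of infinitely many coordinates: the standard way to make it rigorous is to replace the sub-threshold values by the constant $\E[V(0)\mid V(0)<\eps/\lambda]$ one site at a time (one-dimensional Jensen, harmless since conditionally on $\mcl G$ these values are bounded by $\eps/\lambda$) and then pass to the limit by conditional dominated convergence, dominating $-\log g_W(n)$ by the cost of one fixed path to the hyperplane, which is conditionally integrable given $\mcl G$. This is precisely the content of the argument of van den Berg--Kesten and Zerner that the paper cites, so it is an elision on your part rather than a gap.
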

This follows as in the proof of the last statement of \cite[Proposition~4]{zer}, using the fact that the convergence in \cite[(6.12)]{shape} holds in $L^1$. We define 
\begin{equation}
\label{defIeps}
\mfk{I}_{\eps,\lambda} = \int f(\lambda z) \ \d \mu_\eps(z),
\end{equation}
where $\mu_\eps$ is the law of $\td{V}_\eps(0)$. Elementary 
bounds yield that $\mfk{I}_{\eps,\lambda} \leq (1+ C \epsilon) \mfk{I}_\lambda$ for some universal constant $C$, and thus Theorem~\ref{t:main2} will be a consequence of
\begin{prop}
\label{p:main}
There exists $ K < \infty $ (independent of $\eps$) such that for $\alpha_{\eps,\lambda}(\ell)$ as above,
$$\limsup _{\lambda \to 0 }\frac{\alpha_{\eps,\lambda}(\ell)}{ \sqrt{2d \, \mfk{I}_{\eps,\lambda}}} \leq 1 + K \epsilon.$$
\end{prop}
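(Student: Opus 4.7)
The goal is to lower-bound $\EE_0\Ll[\exp\Ll(-\sum_{k<\msf{T}_n(\ell)} \lambda \td{V}_\eps(X_k)\Rr)\Rr]$ by $\exp(-n(1+K\eps)\sqrt{2d\,\mfk{I}_{\eps,\lambda}})$ through a single explicit scenario, as the hint preceding the proposition indicates. The heuristic optimum is the one where the walk drifts in direction $\ell$ at a speed $v=\sqrt{2\mfk{I}_{\eps,\lambda}/d}$, so that reaching the hyperplane takes $T\approx n/v$ steps. The Cramér cost of maintaining such a drift over $T$ steps of the simple random walk is approximately $T\cdot dv^2/2=ndv/2$, and the accumulated potential cost will come out to $T\,\mfk{I}_{\eps,\lambda}=n\mfk{I}_{\eps,\lambda}/v$; balancing these two contributions yields the target exponent $n\sqrt{2d\,\mfk{I}_{\eps,\lambda}}$.

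The link between the potential cost and the integral $\mfk{I}_{\eps,\lambda}$ comes from the visit-count structure of a transient walk. For $d\ge 3$, the number $N$ of visits of the simple random walk to any fixed site is, conditionally on visiting it at all, geometric with parameter $q_d$, and directly from \eqref{e:deff} one reads off the identity
\[
\E\Ll[e^{-zN}\Rr] \;=\; 1-\frac{f(z)}{q_d} \qquad \text{for } N\sim\mathrm{Geom}(q_d),
\]
so the expected Feynman-Kac contribution per visited site is $1-f(\lambda\,\td{V}_\eps(x))/q_d$. Over the $D\approx q_d T$ distinct sites visited in a typical environment, the values $f(\lambda\,\td{V}_\eps(\cdot))$ should average to $\mfk{I}_{\eps,\lambda}$, and the product over visited sites then contributes $\exp(-D\cdot \mfk{I}_{\eps,\lambda}/q_d)\approx\exp(-T\,\mfk{I}_{\eps,\lambda})$, as required.

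To make this rigorous I would (i) restrict the expectation to the event that the walk stays within a tube of width $\sqrt{T}$ around the straight line of slope $v\ell$, where the large-deviation probability is at least $\exp(-(1+o(1))\,ndv/2)$; (ii) use an excursion/regeneration decomposition in the spirit of Sznitman--Zerner to make the geometric visit counts rigorous, and invoke Bernstein-type concentration (exploiting the uniform bound $0\le f\le q_d$) for the quenched sum of $f(\lambda\,\td{V}_\eps)$ along the path; (iii) optimize in $v$. The slack $(1+K\eps)$ then absorbs the nonlinearity of $f$ at scales $z\gtrsim\eps$ (where $f(z)/z$ already differs from $q_d$ by $O(\eps)$), the contribution of bad sites with $\lambda\,\td{V}_\eps\ge\eps$, and the residual quenched fluctuations. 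The main obstacle I expect is the quenched concentration in (ii): because the path one wants to produce depends on the environment (to avoid clusters of bad sites), a genuine decoupling is required — most plausibly a coarse-graining of the trajectory at a scale much smaller than $1/v$ — in order to compare the quenched integrated potential to its annealed average.
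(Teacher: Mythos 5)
Your heuristic coincides with the paper's: the identity $\E[e^{-zN}]=1-f(z)/q_d$ for the conditioned-geometric visit count is indeed the source of the function $f$, the Cramér cost $ndv/2$ balanced against the potential cost $n\,\mfk{I}_{\eps,\lambda}/v$ gives the exponent $n\sqrt{2d\,\mfk{I}_{\eps,\lambda}}$, and the optimal speed is what fixes the coarse-graining scale $L\b=(2\mfk{I}_{\eps,\lambda})^{-1/2}$ of \eqref{defL}. However, two of your proposed rigorization steps would fail as stated, and they are exactly where the work lies. First, a single tube of width $\sqrt{T}$ around the straight segment of length $n$ cannot carry the quenched bound: for fixed small $\lambda$ a mesoscopic block is good only with probability close to, not equal to, $1$, so as $n\to\infty$ the straight tube almost surely meets bad regions of the environment. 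The scenario must be allowed to deviate laterally, and the paper does this with an oriented-percolation renormalization (Section 4): the single-block estimate (Proposition \ref{stage}) is concatenated along a directed open path of $(L\b,\delta_1)$-good blocks of side $\sim ML\b$, whose existence is guaranteed by Lemma \ref{l:perco}. Some such block argument is unavoidable and is absent from your plan.

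Second, the passage from per-site factors to $\exp(-T\,\mfk{I}_{\eps,\lambda})$. The Feynman--Kac functional does not factorize over visited sites, and a Sznitman--Zerner regeneration structure is not available here: the walk has no intrinsic drift, the ballistic behaviour being created by conditioning on an exponentially unlikely event, so there are no i.i.d.\ regeneration blocks to which a Bernstein inequality could be applied. The paper's substitute is a bound that is \emph{deterministic given a good environment}: the trajectory is cut at stopping times $\sigma_i$ at scale $\eps_0 L\b$; on each piece, subadditivity of $u\mapsto 1-e^{-u}$ and the geometric visit counts (Lemma \ref{geom}) reduce $1-\EE_0[e^{-\sum \lambda V N}]$ to $\sum_x \PP_0[T_x<\sigma]\,f(\lambda V(x))/q_d$ (Proposition \ref{p11}); this sum is then evaluated using the hitting asymptotics $\PP_0[T_x<\sigma]\approx \phi(|x|/\eps_0L\b)\,|x|^{2-d}$ of Lemma \ref{lll} together with the counts \eqref{goodrel}--\eqref{goodirr} of important points in boxes of side $\delta_1 L\b$, the normalization \eqref{integr1} producing exactly the factor $q_d\,\eps_0^2L\b^2$. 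Since each piece contributes only $O(\eps_0^2)$ to the exponent, exponentiating loses only a factor $1+O(\eps)$; the only probabilistic input on the environment is the law of large numbers for box counts (Lemma \ref{goodenv}), plus a Harnack argument (Lemma \ref{har}) to handle the conditioning on $X_\sigma$ and an annulus estimate (Lemma \ref{l:annulus}) for the important points near the cutting spheres. These are the missing ingredients; the remainder of your outline (choice of $v$, the role of $\eps$ in absorbing the nonlinearity of $f$ and the constant part of $\td{V}_\eps$) is correct.
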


The advantage of this reduction is that it permits us to deal with a simpler environment than the original one.  For $\lambda $ small the great majority of points $x \in \IZ^d$ will have
$\tilde V_ \epsilon (x) $ equal to the constant value $ \E [ V(0) \ \vert \ V(0) < \frac{\epsilon}{\lambda}]$.  We call the remaining points (that is, the sites $x$ such that $V(x) \ge \eps/\lambda$) the \emph{important points}. Those sites will (when suitably renormalized) ressemble a Poisson cloud, and can be analysed by coarse-graining techniques.
\vspace{0.3cm}

In this paper, the main work is done in Section 3.  This is preceded by Section 2 which gives some simple technical results for random walks, chiefly based on the invariance principle.
Section 3 exploits the law of large numbers for the environment as $\lambda $ becomes small.  It culiminates in Proposition \ref{stage} which states that within a ``good" environmnent, the random walk can move forward in the $\ell$-direction while encurring appropriate costs. This ``building block" is transformed into a result about the Lyapunov exponent by a block argument in Section 4. Section 5 concerns a special case that had to be left appart in the previous arguments.

For notational reasons, we will treat explicitly the case $\ell \ = \ (1,0, \cdots 0)$, but this entails no loss of generality, since the main tool is the invariance principle, and there are no subtle lattice effects to take account of.

%Fix $\epsilon >0$

%Write $J(V,\lambda) =  \int \mu(dx) (1-e^{-\lambda x}) d x;$

%$$J(V_{\epsilon},\lambda) =  P(V< \epsilon / \lambda) \left(1- e^{-\E ( V(x) \vert V(x) < \frac{\epsilon}{\lambda})}  \right) +  \int_{x > \frac{\epsilon}{\lambda} } 1- e^{-\lambda x} \mu (d x).$$

%

%
%
%
%
%%%%%%%%%%%%%%%%%%%%%%%%%%%%%%%%%%%%%%%%%%%%%%%%%%%%%%%%%%%%%%
%%%%%%%%%%%%%%%%%%%%%%%%%%%%%%%%%%%%%%%%%%%%%%%%%%%%%%%%%%%%%%
%
%
%
\section{Technical estimates}

In this section, we wish to estimate the probability for our random walk to advance by $n$ in the $\ell$ direction, with appropriate speed.
%
%
%We choose $L$ so that
%
%\begin{equation}
%\label{defL}
%L^{-1} = \sqrt{2 I(\tilde{V}_ \epsilon, \lambda )}.
%\end{equation}
%We aim to show that the essential features of the problem become visible at this scale, and that the useful random walk paths will behave at this scale as random with a bias.  At lower scales they %will just be unbiased random walks, at higher they become deterministic motion.
%
%We suppose first that
%\begin{equation}
%\label{assumption1}
%L\b^{-1} = \sqrt{2 I(\tilde{V}_ \epsilon, \lambda )}\leq \epsilon^{-2} \sqrt{\P\Ll[V \geq \frac{\epsilon}{\lambda}\Rr]}.
%\end{equation}
%
%We will later sketch the (easier) second case where $\{x: x > \frac{\epsilon}{\lambda}\}$ makes little contribution to $I(V,\lambda)$.
%
We first analyze the limiting object for rescaled random walks, that is, Brownian motion with a drift. We are interested in Brownian motion with covariance matrix $\frac{1}{d} \ \mathrm{Id}$, since it is the scaling limit of our discrete-time random walk.\\

Let $B = (B_t)_{t \ge 0}$ denote the canonical process on the space $C(\R_+,\R^d)$ of continuous functions from $\R_+$ to $\R^d$, and let $\mcl{F}_t = \sigma(B_s, s \le t)$. For $x \in \R^d$ and $h \in \R$, we denote by $Q^h_x$ the law of the Brownian motion with covariance matrix $\frac{1}{d} \ \mathrm{Id}$, drift $h \ell$ and starting at $x$, where $\ell = (1,0,\ldots,0) \in \R^d$. We simply write $Q_x$ for $Q^0_x$. As is well-known, the measure $Q^h_x$ has Radon-Nikodym derivative
\begin{equation}
\label{Radon-Nikodym}
e^{dh \ell \cdot (B_t - B_0) - dh^2 t/2}
\end{equation}
with respect to the measure $Q_x$ on $\mcl{F}_t$ (see \cite{RY}).

%We introduce a parameter $M$ which will be large compared to $1$ but kept fixed as $L$ tends to infinity.  
For every $M \ge 1$, consider the event $A^h_{M}$ defined by
\begin{enumerate}
\item[(i)] 
the path hits $\{M \} \times \left( \frac{\sqrt{M}}{2}, \frac{3\sqrt{M}}{2} \right) \times  \left( \frac{-\sqrt{M}}{2}, \frac{\sqrt{M}}{2} \right)^{d-2}$ in time $M/h$ or less,
\item[(ii)]
before this time, the path $B$ does not leave $(B_0-2, \infty) \times
\left( - \sqrt{M}, 2\sqrt{M} \right) \times { \left( -\sqrt{M}, \sqrt { M} \right)}^{d-2} $. 
\end{enumerate} 

Since under $Q^h_x$, the first component essentially moves linearly with speed $h$ while the other components vary diffusively, one has 
\begin{lemma}
For every $h \neq 0$, there exists a constant $c = c(d,h) > 0$ such that for every $M \ge 1$ and $x \in \left( -\sqrt{M} /{2} , \sqrt{M}/{2} \right)^{d}$,
$$Q_x^h\Ll[A^h_{M}\Rr] \geq c.$$
\end{lemma}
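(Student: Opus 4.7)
The plan is to exploit the independence of the first coordinate and the transverse coordinates under $Q^h_x$: the first coordinate $B^1$ is a one-dimensional Brownian motion with drift $h$ and variance $1/d$ per unit time, while $B^\perp := (B^2, \ldots, B^d)$ is an independent driftless $(d-1)$-dimensional Brownian motion with covariance $d^{-1}\,\mathrm{Id}$. Setting $T = M/h$ and $\tau = \inf\{t \geq 0 : B^1_t = M\}$, I would first produce a lower bound, uniform in $M$ and the starting position, for the probability of a ``good'' event $E_1$ depending on $B^1$ alone, then show that on $E_1$ the $B^\perp$-conditional probability of the transverse requirements of $A^h_M$ is also bounded below, and finally combine the two estimates through independence.

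For $E_1$ I take $\{T/2 \leq \tau \leq T\} \cap \{B^1_s > x_1 - 2 \text{ for all } s \in [0, \tau]\}$. The upper bound $\tau \leq T$ holds with probability at least some $c(h,d) > 0$ since $B^1_T$ is Gaussian with mean $x_1 + M \geq M/2$ and variance $M/(hd)$, so that $Q^h_x[B^1_T \geq M]$ is bounded away from $0$ uniformly in $M \geq 1$ and in $x_1 \in (-\sqrt{M}/2, \sqrt{M}/2)$. The lower bound $\tau \geq T/2$ follows from a reflection-principle / Gaussian-tail argument applied to $W^1_t + ht$ over $[0, T/2]$. Finally, since $\inf_{t \geq 0}(W^1_t + ht)$ has the distribution of $-\mathrm{Exp}(2hd)$, the confinement probability is at least $1 - e^{-4hd}$. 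Together these give $Q^h_x[E_1] \geq c_1(h,d) > 0$.

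For the transverse part, conditional on any realization of $B^1$ with $\tau \in [T/2, T]$, the process $B^\perp$ is still a driftless $(d-1)$-dimensional Brownian motion with covariance $d^{-1}\,\mathrm{Id}$, by independence. The Brownian scaling $\tilde{B}^\perp_s := B^\perp_{sT}/\sqrt{M}$ turns $B^\perp$ into a Brownian motion with covariance $(hd)^{-1}\,\mathrm{Id}$ starting in $(-1/2, 1/2)^{d-1}$, and the transverse requirements of $A^h_M$ become: $\tilde{B}^\perp$ stays in $(-1, 2) \times (-1, 1)^{d-2}$ on $[0, \tilde{\tau}]$ and lies in $(1/2, 3/2) \times (-1/2, 1/2)^{d-2}$ at the scaled time $\tilde{\tau} := \tau/T \in [1/2, 1]$. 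For each admissible $\tilde{\tau}$ and starting point this probability is strictly positive by non-degeneracy of Brownian motion, and a continuity/compactness argument yields a uniform lower bound $c_2(h,d) > 0$. Combining via independence, $Q^h_x[A^h_M] \geq c_1 c_2 > 0$. The main technical obstacle is securing uniformity of $c_2$ over the admissible range of $\tilde{\tau}$ and starting positions; everything else reduces to standard one-dimensional Brownian estimates.
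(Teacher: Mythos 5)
Your decomposition is exactly the one the paper has in mind (the text offers only the one-line heuristic that the first coordinate moves ballistically while the others diffuse, and proves nothing further), and most of your ingredients are correct: the Gaussian lower bound for $Q^h_x[B^1_T\ge M]$ uniform over $x_1\in(-\sqrt M/2,\sqrt M/2)$, the fact that $\inf_{t\ge 0}(W^1_t+ht)$ is $-\mathrm{Exp}(2hd)$-distributed, the rescaled transverse covariance $(hd)^{-1}\,\mathrm{Id}$, and the continuity--compactness argument giving a uniform $c_2$ over starting points in the closed cube and $\tilde\tau\in[1/2,1]$. The reduction to the \emph{first} hitting time of the hyperplane is also legitimate, since the sub-event you construct is contained in $A^h_M$.

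The one step that does not follow as written is ``together these give $Q^h_x[E_1]\ge c_1(h,d)>0$''. The three longitudinal events $\{\tau\le T\}$, $\{\tau\ge T/2\}$ and the confinement event are not independent, and the only generic way to combine three individual lower bounds is a union bound on the complements, which here yields $Q^h_x[E_1]\ge Q^h_x[T/2\le\tau\le T]-e^{-4hd}$. For small $h$ this is negative: $e^{-4hd}$ is then close to $1$, while $Q^h_x[\tau\le T]$ tends to roughly $1/2$ for $x_1=0$ and large $M$ (the fluctuation of $\tau$ around $T$ is symmetric to leading order). Since the lemma is asserted for every $h\ne 0$, you need one more idea to couple the timing constraint with the confinement constraint. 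A clean fix: both $\{\tau\le T\}$ and $\{\inf_{s\le\tau}B^1_s>x_1-2\}$ are increasing functions of the driving noise $W^1$ in the pointwise order on paths (a larger path hits $M$ sooner and has a larger running minimum up to that earlier time), so by positive association of Brownian motion (Pitt's theorem applied to finite-dimensional marginals, or the explicit joint law of the hitting time and the running minimum of drifted Brownian motion via the two-barrier reflection formula) their intersection has probability at least the product $p_1(1-e^{-4hd})$; then subtract $Q^h_x[\tau<T/2]$, which tends to $0$ as $M\to\infty$ uniformly in $x_1$, and treat $M$ in a bounded range by continuity and positivity of the relevant probabilities on a compact parameter set. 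With that repair your argument is complete.
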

This and the Radon-Nikodym derivative given in \eqref{Radon-Nikodym} yield
\begin{cor}
\label{c:bm}
For every $h \neq 0$, there exists a constant $c_1 = c_1(d,h) > 0$ such that for every $M \ge 1$ and $x \in (-1/2,1/2) \times \left( -\sqrt{M} /{2} , \sqrt{M}/{2} \right)^{d-1}$,
$$
Q_x\Ll[A^h_{M}\Rr] \geq c_1 e^{-M \frac{dh}{2}}.
$$
\end{cor}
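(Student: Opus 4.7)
The plan is to transfer the previous lemma's constant lower bound under $Q_x^h$ into a bound under $Q_x$ by means of the Radon-Nikodym identity \eqref{Radon-Nikodym}, applied at a bounded stopping time. Let
\[
\tau = \inf\bigl\{t \ge 0 : B_t \cdot \ell \ge M\bigr\},
\]
so that condition (i) in the definition of $A^h_M$ forces $\tau \le M/h$ on that event. Since $\tau \wedge (M/h)$ is then a bounded stopping time that agrees with $\tau$ on $A^h_M$, the Girsanov identity at this stopping time yields
\[
Q_x[A^h_M] = \E_{Q_x^h}\!\left[\mathbf{1}_{A^h_M}\,\exp\!\left(-dh\,\ell \cdot (B_\tau - B_0) + \tfrac{dh^2}{2}\tau\right)\right].
\]

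The first term collapses to something almost deterministic on $A^h_M$: by definition of $\tau$, $B_\tau \cdot \ell = M$, and since $x_1 = B_0 \cdot \ell \in (-1/2, 1/2)$, one has $\ell \cdot (B_\tau - B_0) = M - x_1 \le M + 1/2$. This extracts a clean prefactor
\[
Q_x[A^h_M] \ge e^{-dh(M + 1/2)}\,\E_{Q_x^h}\!\left[\mathbf{1}_{A^h_M}\,e^{dh^2 \tau/2}\right],
\]
reducing everything to a lower bound of order $e^{dhM/2}$ for the remaining expectation.

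To obtain that factor I would combine the previous lemma's bound $Q_x^h[A^h_M] \ge c$ with a conditional Jensen inequality,
\[
\E_{Q_x^h}\bigl[e^{dh^2 \tau /2} \bigm| A^h_M\bigr] \ge \exp\!\left(\tfrac{dh^2}{2}\,\E_{Q_x^h}[\tau \mid A^h_M]\right),
\]
so that the task reduces to showing $\E_{Q_x^h}[\tau \mid A^h_M] \ge M/h - C(d, h)$. Under $Q_x^h$ the first coordinate drifts linearly at rate $h$, so by optional stopping applied to the martingale $B_t \cdot \ell - x_1 - ht$ at the bounded stopping time $\tau \wedge (M/h)$, the unconditional mean of $\tau$ is essentially $M/h$; conditions (i) and (ii) of $A^h_M$ together with the lemma's constant lower bound then let one pass from unconditional to conditional statements without losing more than an additive constant.

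The main obstacle is exactly this last step — proving that the truncation $\tau \le M/h$ built into $A^h_M$ does not shift the conditional mean of $\tau$ by more than $O(1)$ — since the naive optional-stopping bound only yields closeness up to the typical fluctuation scale. Once it is in hand, all remaining factors ($e^{-dh/2}$ from $x_1$, the error $e^{-dh^2 C/2}$ from the Jensen step, the lemma's constant $c$) are independent of $M$ and are absorbed into $c_1 = c_1(d, h)$.
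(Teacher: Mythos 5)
Your change of measure at the hitting time is exactly the argument the paper has in mind (its entire proof is the assertion that the Lemma and \eqref{Radon-Nikodym} ``yield'' the Corollary), and your reduction to a lower bound on $\E_{Q_x^h}\big[\mathbf{1}_{A^h_M}e^{dh^2\tau/2}\big]$ is correct modulo one small repair: $A^h_M$ is not $\mathcal{F}_{\tau\wedge(M/h)}$-measurable when $\tau$ is the hitting time of the whole hyperplane (the path may first cross $\{y_1=M\}$ outside the target window and return), so you should stop at the hitting time of the window itself; since the window lies in the hyperplane, $\ell\cdot(B-B_0)=M-x_1$ there too and nothing else changes. The genuine problem is the step you yourself flag, and it cannot be closed in the form you propose. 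Under $Q_x^h$ the hitting time $\tau$ is asymptotically Gaussian around $M/h$ with fluctuations of order $\sqrt{M}$, and $A^h_M$ imposes $\tau\le M/h$, an event of probability about $1/2$; conditioning on it therefore pulls the mean down by $\Theta(\sqrt{M})$, not $O(1)$. Thus $\E_{Q_x^h}[\tau\mid A^h_M]=M/h-\Theta(\sqrt M)$, and the Jensen step only delivers $Q_x[A^h_M]\ge c\,e^{-dhM/2-C\sqrt M}$.

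In fact no argument can give the Corollary as printed: by the reflection principle and the standard Gaussian tail estimate,
\begin{equation*}
Q_x\big[A^h_M\big]\ \le\ Q_x\Big[\sup_{s\le M/h}\ell\cdot(B_s-B_0)\ \ge\ M-\tfrac12\Big]\ \le\ \frac{C(d,h)}{\sqrt{M}}\ e^{-M\frac{dh}{2}},
\end{equation*}
so the stated bound fails for large $M$ and your attempt was doomed precisely at the point you identified. The correct conclusion of your computation is obtained by restricting the expectation to $\{\tau\ge M/h-K\sqrt M\}$: for $K=K(d,h)$ large, Chebyshev applied to $\tau$ shows this event retains $Q_x^h$-probability at least $c/2$ inside $A^h_M$, whence
\begin{equation*}
Q_x\big[A^h_M\big]\ \ge\ \tfrac{c}{2}\,e^{-dh(M+1/2)}\,e^{dhM/2-dh^2K\sqrt M/2}\ =\ c_1(d,h)\,e^{-M\frac{dh}{2}-C\sqrt M}
\end{equation*}
(a local limit theorem for $\tau$ would sharpen this to $c_1M^{-1/2}e^{-Mdh/2}$). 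Either form is all the rest of the paper uses: the subexponential loss is divided by $ML_\lambda$ in the block argument of Section~4 and disappears as $M\to\infty$.
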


This result can now be applied to the original object of interest, the random walk. On the space of random walk trajectories $X$, we define the event $A^{h,L}_M$ by
\begin{enumerate}
\item[(i)] 
the random walk hits $\{M L \}  \times { \left( \frac{\sqrt{M}L}{2}, \frac{3\sqrt{M}L}{2} \right) } \times { \left( \frac{-\sqrt{M}L}{2},  
\frac{\sqrt{M}L}{2} \right)}^{d-2}$ 
in time $M L^{2}/{h}$ or less,
\item[(ii)]
before this time, the walk $X$ does not leave $(X_0-2L, \infty ) \times \left( - \sqrt{M}L, 2\sqrt{M}L \right) \times { \left( -\sqrt{M}L, \sqrt { M} L \right)}^{d-2}$. 
\end{enumerate}

From the invariance principle, we thus get the following.
\begin{cor}
\label{c:rw}
Let $h \neq 0$ and $c_1 > 0$ be given by Corollary~\ref{c:bm}. For every $M \ge 1$, every $L$ sufficiently large, and every $x \in (-L/2,L/2) \times \left( -\sqrt{M}L /{2} , \sqrt{M}L/{2} \right)^{d-1}$,
$$
\PP_x\Ll[A^{h,L}_{M}\Rr] \geq \frac{2c_1}{3} e^{-M \frac{dh}{2}}.
$$
\end{cor}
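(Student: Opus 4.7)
The plan is to apply Donsker's invariance principle to transfer the Brownian-motion lower bound of Corollary~\ref{c:bm} to the simple random walk after rescaling space by $L$ and time by $L^2$.

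First, I would verify that under the diffusive rescaling $(X_k)_{k\ge 0} \mapsto (L^{-1} X_{\lfloor L^2 t\rfloor})_{t\ge 0}$, the event $A^{h,L}_M$ corresponds exactly to the event $A^h_M$ of the continuous setting: the target hyperrectangle, the confinement region and the time horizon in the definition of $A^{h,L}_M$ are the $L$-scaling (respectively $L^2$-scaling in time) of those appearing in $A^h_M$, and a starting point $x \in (-L/2,L/2) \times (-\sqrt{M}L/2, \sqrt{M}L/2)^{d-1}$ rescales to $x/L \in (-1/2,1/2) \times (-\sqrt{M}/2, \sqrt{M}/2)^{d-1}$, which is precisely the range for which Corollary~\ref{c:bm} applies.

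Second, the event $A^h_M$ is defined entirely through open sets (strict inequalities on position and a strict upper bound on the hitting time), so its topological boundary in $C(\R_+, \R^d)$ is not charged by the law of Brownian motion (by continuity and non-degeneracy of the paths). By Donsker's theorem the rescaled walk converges in distribution to Brownian motion with covariance $\frac{1}{d}\,\mathrm{Id}$, and therefore $\PP_x[A^{h,L}_M] \longrightarrow Q_{x/L}[A^h_M] \ge c_1 e^{-Mdh/2}$ as $L \to \infty$. For $L$ large the prefactor $2/3$ absorbs the finite-$L$ error and yields the stated bound.

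The only real subtlety is that the starting point $x$ ranges over a box whose size grows with $L$, so pointwise weak convergence is not enough. Since the map $y \mapsto Q_y[A^h_M]$ is continuous on the compact set $[-1/2,1/2] \times [-\sqrt{M}/2, \sqrt{M}/2]^{d-1}$ (the drift shift and the open-set nature of $A^h_M$ make it easy to control), and since the discretisation $x\in\Z^d$ introduces an error in $x/L$ of size $O(1/L)$, a standard Skorokhod-embedding or equicontinuity argument upgrades the pointwise Donsker convergence to convergence uniform in $y = x/L$ over that compact set. This uniformity is the only nontrivial step; everything else is bookkeeping about the scaling.
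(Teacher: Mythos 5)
Your proposal is correct and follows exactly the route the paper intends: the paper gives no written proof beyond the phrase ``From the invariance principle, we thus get the following,'' and your argument (diffusive rescaling matching $A^{h,L}_M$ to $A^h_M$, Donsker plus a continuity-set/boundary check, and uniformity of the convergence over the compact set of rescaled starting points) is the standard way to make that one-line invocation rigorous. Nothing further is needed.
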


We conclude this section with a technical variation of this result that will be more adapted to our needs.
Given $0 < \epsilon_{0} \le \eps$ and $L$, we define the stopping times ${(\sigma_{i})}_{i \geq 0}$ recursively by $\sigma_{0}=0$ and, for $i \ge 1$,
\begin{equation}
\label{e:defsigmai}
\sigma_{i} = \inf \{n > \sigma_{i-1} : \vert X_{n} -X_{\sigma_{i-1} } \vert \geq \epsilon_{0} L\}.
\end{equation}
These stopping times are introduced as a substitute for  fixed times.
The point is that, given $M$, we can choose the parameter $\epsilon_0 $ to be sufficiently small that the $\sigma_i$ exhibit good behaviour even on the ``extreme" event $A^{h,L}_M$.
Let $\td{A}^{h,L}_{M}$ be the event that 
\begin{enumerate}
\item[(i)] 
the random walk hits $\{M L\} \times \left( \frac{\sqrt{M}L}{2}, \frac{3\sqrt{M}L}{2} \right) \times { \left( \frac{-\sqrt{M}}{2}, \frac{\sqrt{M}L}{2} \right)}^{d-2}$ before time $\sigma _{\frac{M}{h}\frac{ (1+\epsilon)}{\epsilon_{0}^{2}} }$,
\item[(ii)]
before this time, the walk $X$ does not leave $\left(X_0-2 L, \infty\right) \times \left(- \sqrt{M} L, 2 \sqrt{M} L\right) \times \left(- \sqrt{M} L,  \sqrt{M} L\right)^{d-2}$. 
\end{enumerate} 
Let us denote by $F$ the (random) smallest index $k$ such that $\sigma_k $ is at least as large as the hitting time of the hyperplane $\{x: x_1 = ML\}$.  So on the event $\td{A}^{h,L}_{M}$, we have $F \ \leq \frac{M}{h} \frac{(1+ \epsilon )}{\epsilon_0 ^ 2}$.
\begin{lemma}
\label{six}
Let $h \neq 0$ and $c_1 > 0$ be given by Corollary~\ref{c:bm}. For every $M \ge 1$, every $L$ large enough, every $\eps_0 > 0$ small enough and every $x \in (-L/2, L/2) \times \left( -\sqrt{M}L /{2} , \sqrt{M}L/{2} \right)^{d-1}$,
$$
\PP_x\Ll[\td{A}^{h,L}_{M}\Rr] \ge \frac{c_1}{2} e^{-M \frac{dh}{2}}.
$$
\end{lemma}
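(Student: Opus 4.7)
The plan is to deduce Lemma~\ref{six} from Corollary~\ref{c:rw} by showing that $\td{A}^{h,L}_M$ essentially contains $A^{h,L}_M$, up to a negligible loss coming from the event that the $\sigma_i$-clock ticks faster than expected. Set $k^* := \lfloor \frac{M(1+\epsilon)}{h \epsilon_0^2}\rfloor$. The key observation is that if $A^{h,L}_M$ occurs and in addition $\sigma_{k^*} > M L^2/h$, then $\td{A}^{h,L}_M$ occurs: the target box is hit by time $ML^2/h < \sigma_{k^*}$, which yields (i), while the spatial confinement (ii) is inherited from (ii) of $A^{h,L}_M$ because any time before hitting the target is $\le M L^2/h$. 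Consequently
$$\PP_x[\td{A}^{h,L}_M] \ge \PP_x[A^{h,L}_M] - \PP_x[\sigma_{k^*} \le ML^2/h].$$

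To control the subtracted term, I would use the strong Markov property and translation invariance: the increments $(\sigma_i - \sigma_{i-1})_{i \ge 1}$ are i.i.d.\ under $\PP_x$, each distributed as the exit time $\tau$ of the simple random walk started at the origin from the Euclidean ball of radius $\epsilon_0 L$. Since $|X_n|^2 - n$ is a martingale, optional stopping and a standard overshoot estimate give $\EE_0[\tau] = \epsilon_0^2 L^2 (1 + o(1))$ as $L \to \infty$, and the Brownian scaling $\tau \stackrel{d}{\approx} \epsilon_0^2 L^2 \tau_1$ (after invariance principle) yields $\var(\tau) \le C \epsilon_0^4 L^4$ for a universal constant $C$. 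Hence $\EE_x[\sigma_{k^*}] \ge (1+\epsilon/2) M L^2/h$ for $L$ large, and Chebyshev's inequality applied to the sum of i.i.d.\ increments gives
$$\PP_x[\sigma_{k^*} \le ML^2/h] \le \frac{k^* \, \var(\tau)}{\Ll(\epsilon M L^2/(2h)\Rr)^2} \le C'(d,h,M,\epsilon) \, \epsilon_0^2.$$

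Choosing $\epsilon_0$ small enough (depending only on $d, h, M, \epsilon, c_1$) makes this bound smaller than $\frac{c_1}{6} e^{-Mdh/2}$, and combining with $\PP_x[A^{h,L}_M] \ge \frac{2c_1}{3} e^{-Mdh/2}$ from Corollary~\ref{c:rw} then yields $\PP_x[\td{A}^{h,L}_M] \ge \frac{c_1}{2} e^{-Mdh/2}$ for $L$ sufficiently large. The only subtle point is the order of quantifiers: one must first fix $\epsilon_0$ small to absorb the $\epsilon_0^2$ loss in the Chebyshev step, and only then take $L$ large so that Corollary~\ref{c:rw} and the asymptotic $\EE_0[\tau] = \epsilon_0^2 L^2 (1+o(1))$ apply; this is precisely the order allowed by the statement of the lemma.
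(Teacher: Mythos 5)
Your proof is correct and follows essentially the same route as the paper: both rest on the containment $A^{h,L}_M \setminus \td{A}^{h,L}_M \subseteq \{\sigma_{\lfloor M(1+\eps)/(h\eps_0^2)\rfloor} \le ML^2/h\}$ together with the observation that the probability of the latter event can be made negligible relative to $e^{-Mdh/2}$ by taking $\eps_0$ small. The only difference is that the paper invokes an exponential (Chernoff-type) concentration bound $e^{-\frac{hM}{2}\frac{C\eps}{\eps_0^2}}$ for the sum of i.i.d.\ exit times, whereas you use Chebyshev to get a bound of order $\eps_0^2$; both vanish as $\eps_0 \to 0$, which is all that is needed.
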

\begin{proof}
This follows from the fact that
\begin{eqnarray*}
\PP_x\Ll[A^{h,L}_{M} \setminus \td{A}^{h,L}_{M}\Rr] & \leq & \PP_x\Ll[\sigma_{ \frac{M}{h} \frac{(1+\epsilon)}{\epsilon_{0}^{2}} } < \frac{M L^{2}}{h}\Rr] \\
& \leq & e^{- \frac{hM}{2} \frac{C \epsilon }{\epsilon_{0}^{2}} },
\end{eqnarray*}
which is arbitrarily small compared with $e^{-M \frac{dh}{2}}$ provided that we choose $\eps_0$ sufficiently small.
\end{proof}

\begin{rem}
From now on we will take $h = \frac{1}{\sqrt d}$, and the superscript $h$ will be dropped in any notation for events or variables.
\end{rem}
\begin{rem}
From now on we suppose $\epsilon_{0}$ to be small enough and $L$ large enough to ensure that the conclusion of Lemma \ref{six} is true.  We also introduce
$ \delta $ and $\delta _1$ so that $0 < \delta_1 \ll \delta \ll \epsilon_0$.
They will need to be small enough to satisfy a finite number of conditions given below, but are otherwise kept fixed.
\end{rem}

%$\epsilon_{0}  (\ll \epsilon)$

%
%
%
%
%%%%%%%%%%%%%%%%%%%%%%%%%%%%%%%%%%%%%%%%%%%%%%%%%%%%%%%%%%%%%%
%%%%%%%%%%%%%%%%%%%%%%%%%%%%%%%%%%%%%%%%%%%%%%%%%%%%%%%%%%%%%%
%
%
%
\section{Coarse Graining and the Environment}
  
In this section we begin to consider the environment.  Our first task is to show that on the scale $L\b$, chosen as below, the environment is highly regular for  points of high $V(.)$ value and, given only ``reasonable" law of large numbers behaviour, these points will be such that they are struck in a ``Poisson" manner.

%The scale $L\b$ will satisfy $L\b^{2} \ \P[V(0) \geq \epsilon/\lambda ] \geq {\epsilon ^ 4}$. \\

We now choose $L = L\b $ as a function of $\lambda$ as
\begin{equation}
\label{defL}
L\b^{-1} = \sqrt{2 \mfk{I}_{\eps,\lambda}},
\end{equation}
where we recall that $\mfk{I}_{\eps,\lambda}$ was defined in \eqref{defIeps}. 
We aim to show that the essential features of the problem become visible at this scale, and that the useful random walk paths will behave at this scale as random with a bias.  At lower scales they will just be unbiased random walks, at higher they become deterministic motion.

We first suppose that
\begin{equation}
\label{assumption1}
L\b^{-1} = \sqrt{2 \mfk{I}_{\eps,\lambda}}\leq \epsilon^{-2} \sqrt{\P\Ll[V \geq \frac{\epsilon}{\lambda}\Rr]}.
\end{equation}

We will later sketch the (easier) second case where $\{x: x > \frac{\epsilon}{\lambda}\}$ makes little contribution to $\mfk{I}_\lambda$. An immediate consequence of this assumption is that
\begin{equation}
\label{c:assumption1}
\int_{\frac{\epsilon}{\lambda}}^{+\infty} f(\lambda z) \ \d \mu(z) \ge f(\eps) \P\Ll[V \ge \frac{\epsilon}{\lambda}\Rr] \ge \frac{\eps^5}{L_\lambda^2}.
\end{equation}

We divide up the values in $\left[ \frac{\epsilon}{\lambda}, \frac{1}{\epsilon \lambda}\right)$ into intervals of length $\frac{\epsilon^{2}}{\lambda}$ (except for the last) $I_{0} = [a_0,b_0), I_{1} = [a_1,b_1), \dots I_{R} = [a_R,b_R)$, and we let $I_{R+1} = \left[ \frac{1}{\epsilon \lambda}, \infty\right),$ the interval of values best avoided. Note that the number $R$ depends only on $\eps$ (which will be chosen sufficiently small but otherwise kept fixed), and not on $\lambda$ (which we will let tend to $0$).

We divide up the intervals into two classes, as in \cite{shape1}. We say that the interval $I_j$ is \emph{relevant} if
\begin{equation}
\label{defrelevant}
\P[V(0) \in I_{j}] \ge \frac{\epsilon^{9}}{L\b^{2}}.
\end{equation}
We say that it is \emph{irrelevant} otherwise.

As the name indicates, the key is that points with values in irrelevant intervals are not relevant to scale $L\b$, while the number of relevant important points in a ``good cube'' (to be specified later) of side length $L\b^d$ should be of order $L\b^{d-2}$, and so there should be a reasonable chance that one of these points will be hit by the random walk (or by a lightly conditioned random walk) before exiting the cube.

For $\delta_1 > 0$ (to be chosen much smaller than $\eps$, and otherwise kept fixed) and every $\underline{n} = (n_1,\ldots,n_d) \in \Z^d$, we define the mesoscopic box
\begin{equation}
\label{defC}
C_{\underline{n}, \delta_1} =  \big[\delta_1 L\b \underline{ n}, \delta_1 L\b (\underline{n} +\underline{1})\big) = \prod_{i = 1}^d \  \big[\delta_1 L\b n_i, \delta_1 L\b (n_i + 1)\big).
\end{equation}

We say that an environment $(V(x))_{x \in \Z^d}$ is $(L\b, \delta_1)${\it-good on }$A \subseteq \IZ^{d}$ (or that $A$ \emph{is }$(L\b,\delta_1)$\emph{-good}) if for every mesoscopic box $C_{\underline{n}, \delta_1}$ (with $\underline{n} \in \Z^d$) intersecting $A$, the following two properties hold:
\begin{itemize}
\item
for every $j$ such that $I_j$ is relevant,
\begin{equation}
\label{goodrel}
\sum_{x \in C_{\un{n},\delta_1}} \1_{V(x) \in I_j} \le (1 + \epsilon) (\delta_1 L\b)^{d} \ \P[V(0) \in I_{j}] ;
\end{equation}
\item
for every $j$ such that $I_j$ is irrelevant,
\begin{equation}
\label{goodirr}
\sum_{x \in C_{\un{n},\delta_1}} \1_{V(x) \in I_j} \le 2{(\delta_1 L\b)}^{d} \  \frac{\epsilon^{9}}{L\b^{2}}.
\end{equation}
\end{itemize}
We start by showing that sets with size of order $L\b$ are good with high probability.
\begin{lemma}  
\label{goodenv}
For every fixed $M, \epsilon $ and $\delta_1$, the probability that $[-ML\b, ML\b]^{d}$ is $(L\b, \delta_1)$-good tends to one as $\lambda$ tends to $0$.
\end{lemma}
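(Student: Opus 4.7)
The plan is to reduce to a fixed mesoscopic box and a fixed interval $I_j$ by a union bound, and then apply standard concentration inequalities for sums of i.i.d.\ Bernoulli variables. The two ingredients that make this work are that, on the one hand, the number of mesoscopic boxes intersecting $[-ML_\lambda, ML_\lambda]^d$ is bounded by $(3M/\delta_1)^d$, and the number of intervals is $R+2$, both quantities independent of $\lambda$; and on the other hand, that $L_\lambda \to \infty$ as $\lambda \to 0$, which follows from dominated convergence applied to $\mathfrak{I}_{\eps,\lambda}$ using that $f \leq 1$ and $f(\lambda z) \to 0$ pointwise.

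After this reduction, fix a mesoscopic box $C_{\un{n}, \delta_1}$ and an interval $I_j$. Write $N_\lambda = (\delta_1 L_\lambda)^d$, $p_j = \P[V(0) \in I_j]$ and $S_j = \sum_{x \in C_{\un n, \delta_1}} \1_{V(x) \in I_j}$, which is a $\mathrm{Binomial}(N_\lambda, p_j)$ random variable with mean $\mu_j = N_\lambda p_j$. In the relevant case I would use Chebyshev's inequality: since $p_j \geq \eps^9 / L_\lambda^2$, we have $\mu_j \geq \delta_1^d \eps^9 L_\lambda^{d-2}$, and
$$
\P\Ll[ S_j \geq (1+\eps) \mu_j \Rr] \leq \frac{\mathrm{Var}(S_j)}{\eps^2 \mu_j^2} \leq \frac{1}{\eps^2 \mu_j} \leq \frac{1}{\eps^{11} \delta_1^d L_\lambda^{d-2}},
$$
which tends to $0$ since $d \geq 3$ and $L_\lambda \to \infty$.

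In the irrelevant case, $\mu_j < \delta_1^d \eps^9 L_\lambda^{d-2} =: T/2$, and we need $\P[S_j \geq T] \to 0$. Chebyshev is too weak here (it would yield a constant bound), so I would instead invoke a Bernstein/Chernoff-type bound: for $t = T - \mu_j \geq T/2$,
$$
\P[S_j \geq T] \leq \exp\Ll( - \frac{t^2}{2(\mu_j + t/3)} \Rr) \leq \exp(- c T)
$$
for some absolute $c > 0$, and $T = 2 \delta_1^d \eps^9 L_\lambda^{d-2} \to \infty$ kills this contribution as well. A union bound over the (finitely many) pairs $(\un n, j)$ then concludes.

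The only genuine subtlety is the dichotomy between the relevant and irrelevant intervals: for relevant intervals the mean itself is large, so second moment control suffices, whereas for irrelevant intervals the target $T$ is large compared to the mean, forcing the use of exponential rather than polynomial concentration. The crucial role of the hypothesis $d \geq 3$ is precisely to ensure that both $\mu_j$ (in the relevant case) and $T$ (in the irrelevant case) diverge, since both scale like $L_\lambda^{d-2}$.
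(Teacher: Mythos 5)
Your proof is correct and follows essentially the same route as the paper: a union bound over the finitely many pairs of mesoscopic boxes and value intervals (both counts bounded uniformly in $\lambda$), followed by binomial tail estimates, with $d \ge 3$ entering exactly where you say it does. The only cosmetic difference is that the paper invokes a single Chernoff-type binomial bound for both classes of intervals (and dismisses the irrelevant case as ``similar''), whereas you split into Chebyshev for relevant intervals and Bernstein for irrelevant ones; both work, and your version makes the relevant/irrelevant dichotomy more explicit.
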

\begin{proof}
For $M$, $\epsilon$ and $\delta_1 $ fixed, the number $(R+2)$ of relevant and irrelevant intervals to be considered remains bounded as $\lambda$ tends to $0$. Similarly, the number of cubes of the form $C_{\underline{n},\delta_1}$ with $\underline{n} \in \Z^d$ that intersect $[-ML\b, ML\b]^{d}$ remains bounded as $\lambda$ tends to $0$.

%Equally the number of cubes $C_{\un{n},\delta_1} $ (with $\un{n} \in \Z^d$) that intersect $[ -M L _ {\lambda}, M L _ {\lambda} ]^d$ is bounded above by $  \left( [ \frac{2M+1}{\delta _1}]+2 \right)^d$ (where here $[.]$ denotes the integer part plus one function).  Once more the important point is that this upper bound does not depend on $\lambda $ and so on $L_\lambda $ as $ \lambda $ tends to zero.

We consider a relevant interval $I_j$ and any cube $C_{\underline{n},\delta_1}  $.  The random quantity 
$$
 \sum_{x \in C_{\underline{n},\delta_1}} I _{V(x) \in I_j} 
$$
is a binomial random variable with parameters $|C_{\underline{n},\delta_1}|$ and $P[V(0) \in I_j]$.
%, for short $\textsf{Bin}( |C_{\underline{n},\delta_1}|, P[V(0) \in I_j ])$.
This is stochastically dominated by a Binomial random variable with parameters $(L\b \delta_1 + 1)^d $ and $P[V(0) \in I_j]$.
By elementary bounds on Binomial tails (see for instance \cite[(2.16)]{shape1}), we have for $\lambda$ small enough,
\begin{multline}
P\Ll[\sum_{x \in C_{\underline{n},\delta_1}} I _{V(x) \in I_j} \ge (1+\eps) \ (\delta_1 L\b)^d \ P[V(0) \in I_j] \Rr]
\\ 
\le P\Ll[\sum_{x \in C_{\underline{n},\delta_1}} I _{V(x) \in I_j} \ge (1+\eps/2) \ |C_{\underline{n},\delta_1}| \ P[V(0) \in I_j] \Rr] \leq \ e^{-c( \epsilon ) |C_{\underline{n},\delta_1}|}
\end{multline}
for some strictly positive $c( \epsilon )$. 
%Thus $P \left[   \sum_{x \in C_{\underline{n},\delta_1}} I _{V(x) \in I_j}  \geq (1+ \epsilon ) (n \delta)^d P(V(0) \in I_j) \right] \ \leq \ e^{-c(\epsilon) L_ \lambda \epsilon ^ 5}$.
%We obtain the same bound for irrelevant intervals and conclude that the probability that 
% $[-ML\b, ML\b]^{d}$  is not $(L_\lambda , \delta_1)$ good is bounded by 
%$$
% (R+2)   \left( [ \frac{2M+1}{\delta _1}]+2 \right)^d e^{-c(\epsilon) L_ \lambda \epsilon ^ 5},
%$$
%which tends to zero as $\lambda \rightarrow \ 0$.
A similar reasoning applies to irrelevant intervals. 
%
%We note that this result is true for any positive value of $\delta,$ no matter how small.
%
\end{proof}

%Thus on good environments we have a (roughly) ``correct" probability of gaining the hyperplane $\{M L\b\} \times \IR^{d-1}$ in such a way that the $X_{
%\sigma_i } $ points are convenient in the sense of properties (ii) and (ii) of event
%$A (M, \epsilon_{0}, \delta)$.

The next result shows that in a good environment, the walk will typically not visit important points close to the boundary of $B(X_0, L\b \epsilon_0)$.
\begin{lemma}
\label{l:annulus}
There exists $K$ (depending on $\eps$) such that for every $\epsilon_0$ and $\delta$ satisfying $ 0 < \delta < \epsilon_0 /2$ and every $\delta_1$ small enough, the following holds. If the environment in $B(0,L\b(\epsilon_{0} + 3 \delta))$ is $(L\b, \delta_{1})$-good, then the probability that a random walk beginning at the origin hits a site of value $\frac{\epsilon}{\lambda}$ or more in $B(0,L\b(\epsilon_0 + 2 \delta)) \setminus B(0, L\b(\epsilon_0 - \delta))$ is bounded by  $K \delta \epsilon_{0}$.
\end{lemma}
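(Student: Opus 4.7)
The plan is to combine two ingredients. First, the Green's function bound for transient simple random walk: since $d \ge 3$, if $G$ denotes the Green's function of the walk on $\Z^d$, one has $\PP_0[\tau_x < \infty] \le G(0,x)/G(0,0) \le C_d |x|^{2-d}$ for every $x \neq 0$. Second, a counting estimate for important points supplied by the goodness hypothesis. Setting $A = B(0, L_\lambda(\epsilon_0 + 2\delta)) \setminus B(0, L_\lambda(\epsilon_0 - \delta))$, the condition $\delta < \epsilon_0/2$ guarantees $|x| \ge \epsilon_0 L_\lambda/2$ for every $x \in A$, so a union bound gives
\begin{equation*}
\PP_0\bigl[\text{walk hits an important point in } A\bigr] \le C_d (\epsilon_0 L_\lambda)^{2-d} \, N,
\end{equation*}
where $N$ is the number of important points in $A$.

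To estimate $N$ I would use the $(L_\lambda, \delta_1)$-goodness of $B(0, L_\lambda(\epsilon_0 + 3\delta))$. Taking $\delta_1$ small enough ensures that every mesoscopic box $C_{\un n, \delta_1}$ meeting $A$ is contained in that good ball, and the number of such boxes is bounded by $C \epsilon_0^{d-1} \delta \, \delta_1^{-d}$. Summing the estimates \eqref{goodrel} and \eqref{goodirr} over all intervals $I_0, \ldots, I_{R+1}$ (those corresponding to values at least $\epsilon/\lambda$) yields, for each such box,
\begin{equation*}
\sum_{x \in C_{\un n, \delta_1}} \1_{V(x) \ge \epsilon/\lambda} \le (1+\epsilon)(\delta_1 L_\lambda)^d \, \P\bigl[V \ge \tfrac{\epsilon}{\lambda}\bigr] + 2(R+2) \epsilon^9 \delta_1^d L_\lambda^{d-2}.
\end{equation*}
Summing over the covering boxes and multiplying by $C_d (\epsilon_0 L_\lambda)^{2-d}$ thus leads to
\begin{equation*}
\PP_0\bigl[\text{walk hits an important point in } A\bigr] \le C \, \epsilon_0 \delta \, L_\lambda^2 \, \P\bigl[V \ge \tfrac{\epsilon}{\lambda}\bigr] + C(R+2) \epsilon^9 \epsilon_0 \delta.
\end{equation*}

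To conclude it remains to check that $L_\lambda^2 \, \P[V \ge \epsilon/\lambda]$ is bounded by an $\epsilon$-dependent constant. Since $f(\lambda z) \ge f(\epsilon)$ whenever $\lambda z \ge \epsilon$, the definition \eqref{defI} gives $\mfk{I}_\lambda \ge f(\epsilon) \, \P[V \ge \epsilon/\lambda]$; combining this with $\mfk{I}_{\epsilon,\lambda} \le (1 + C\epsilon)\mfk{I}_\lambda$ and $L_\lambda^{-2} = 2 \mfk{I}_{\epsilon, \lambda}$ yields $L_\lambda^2 \, \P[V \ge \epsilon/\lambda] \le (1+C\epsilon)/(2f(\epsilon))$. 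Plugging this back produces the stated bound $K(\epsilon) \, \epsilon_0 \delta$. I do not expect any serious obstacle; the only delicate point is making sure that the irrelevant-interval contribution remains under control, but this is automatic since both $R$ and $f(\epsilon)$ depend only on $\epsilon$, which is fixed throughout.
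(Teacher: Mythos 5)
Your argument follows the paper's proof almost line for line: a union bound combined with the Green's function estimate $\PP_0[T_x<\infty]\le C_d|x|^{2-d}$, a count of the important points in the annulus extracted from the goodness hypothesis \eqref{goodrel}--\eqref{goodirr}, and the normalization $L\b^{-2}=2\,\mfk{I}_{\eps,\lambda}$ to absorb $L\b^2\,\P[V\ge\eps/\lambda]$ into an $\eps$-dependent constant. The only flaw is in the last step, where your chain of inequalities is written backwards: starting from $\P[V\ge\eps/\lambda]\le\mfk{I}_\lambda/f(\eps)$ and $L\b^{2}=1/(2\,\mfk{I}_{\eps,\lambda})$, you need an upper bound on the ratio $\mfk{I}_\lambda/\mfk{I}_{\eps,\lambda}$, whereas the inequality $\mfk{I}_{\eps,\lambda}\le(1+C\eps)\mfk{I}_\lambda$ that you invoke controls the reciprocal ratio and gives nothing. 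The repair is immediate and is what the paper does: since $\mu_\eps$ coincides with $\mu$ on $[\eps/\lambda,\infty)$ (the truncation only modifies values below $\eps/\lambda$), one has directly $\mfk{I}_{\eps,\lambda}\ge\int_{\eps/\lambda}^\infty f(\lambda z)\,\d\mu(z)\ge f(\eps)\,\P[V\ge\eps/\lambda]$, hence $\P[V\ge\eps/\lambda]\le L\b^{-2}/(2f(\eps))$. With that one-line correction the rest of your computation (including the treatment of the irrelevant intervals, which contribute only $O(\eps^6\eps_0\delta)$ since $R+2\le C\eps^{-3}$) is sound and yields the claimed bound $K(\eps)\,\delta\,\eps_0$ with $K(\eps)$ of order $1/f(\eps)$.
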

\begin{proof}
Recalling \eqref{e:deff} and \eqref{defIeps}, we note that
$$
\mfk{I}_{\eps,\lambda} = \int f(\lambda z) \ \d \mu_ \epsilon (z) \ge f(\eps) \P[V(0) \ge \eps/\lambda],
$$
and in particular, by the definition of $L\b$, see \eqref{defL},
$$
\P[V(0) \ge \eps/\lambda] \le \frac{L\b^{-2}}{2 f(\eps)}.
$$
If $\delta_1$ is sufficiently small and the environment is $(L\b, \delta_{1})$-good, then it follows that the number of important points in  $B(0,L\b(\epsilon_0 + 2 \delta)) \setminus B(0, L\b(\epsilon_0 - \delta))$ is less than $(1 + \epsilon + 2 \epsilon ^ 2)  L\b^{-2}/f(\eps)$ times the total number of points that are in cubes $C_{\underline{n},\delta_1}$ intersecting this region.  Clearly for $ \delta_1 $ small 
this number is bounded 
by a universal constant times
$$
\frac{L\b^{-2}}{f(\eps)} \epsilon_{0}^{d-1} \delta L\b^{d}.
$$
Recall that, as given by \cite[Theorem~1.5.4]{law}, there exists $K_1 > 0$ such that
\begin{equation}
\label{boundgreen}
\PP_0[X \text{ visits } y] \le K_1 \ |y|^{2-d}.	
\end{equation}
Hence, the probability that a given point in $B(0,L\b(\epsilon_0 + 2 \delta)) \setminus B(0, L\b(\epsilon_0 - \delta))$ is touched by the random walk is thus bounded by
$$
\frac{K_{1}}{{(L\b(\epsilon_{0} - \delta))^{d-2}}} \leq \frac{K_{1} 2^{d-2}}{{(L\b\epsilon_{0})^{d-2}}}.
$$
The probability described in the lemma is thus bounded by a constant times
$$
f(\eps)^{-1} {L\b^{-2}} \epsilon_{0}^{d-1} \delta L\b^{d} \ (L\b\epsilon_{0})^{2-d} = f(\eps)^{-1} \delta \eps_0,
$$
which is the desired result.
\end{proof}
For the random walk $(X_n)_{n \geq 0}$, let $\sigma$ be the stopping time defined by
\begin{equation}
\label{defsigma}
\sigma = \inf \{n \ge 0 : \Ll| X_{n} -X_{0} \Rr|  \geq \epsilon_{0} L\b\}.
\end{equation}
We say that the pair $(x,y) \in (\Z^d)^2$ is \emph{generic} if 
\begin{multline}
\PP_{x}\big[X \text{ hits an important point in } \\
B (X_0, L\b (\epsilon_{0} + 2 \delta)) \setminus B (X_0, L\b(\epsilon_0 - \delta)) \ \vert \ X_{\sigma} = y \big] < \epsilon_{0}^{2} \delta^{1/3}.
\end{multline}
Although this is not explicit in the terminology, we stress that the notion of being generic depends on $\eps_0$ and $\delta$. 
Recalling the definition of the stopping times $(\sigma_j)$ in \eqref{e:defsigmai}, we let 
$$
F = \inf \Ll\{j: X_{\sigma_j} \in \big((M-\eps_0)L\b, \infty \big) \times \IZ^{d-1} \Rr\}
$$
and let $A(M, \epsilon_{0}, \delta)$ be the event that
\begin{enumerate}
\item[(i)] 
the random walk $(X_{\sigma_j})_{j \geq 0 }$ hits $((M-\eps_0) L\b, ML\b) \times \Ll(\frac{\sqrt{M} L\b}{2}, \frac{3\sqrt{M} L\b}{2}\Rr)\times \Ll(-\frac{\sqrt{M} L\b}{2}, \frac{\sqrt{M} L\b}{2}\Rr)^{d-2}$ before leaving $\big(-3L\b, ML\b\big) \times \big(- \sqrt{M} L\b, 2 \sqrt{M} L\b\big)\times \big(- \sqrt{M} L\b,  \sqrt{M} L\b\big)^{d-2}$;
\item[(ii)] $F \le (1+ \epsilon )M \epsilon_{0}^{-2} \sqrt{d}$;
\item[(iii)]
for every $j \le F$, the pair $(X_{\sigma_{j}}, \  X_{\sigma_{j+1}})$ is generic.
\end{enumerate}
As a consequence of Lemmas~\ref{six} and \ref{l:annulus}, we obtain the following result.
\begin{prop} 
\label{cor55}
Let $M, \epsilon_0$ be given with $\epsilon _0 $ small. There exists $\ov{\delta} \in(0,1)$ such that for every
$ \delta < \ov{\delta}$, there exists $\ov{\delta}_1 \in (0, \ov{\delta})$ such that  for every $\delta_1  <  \ov{\delta}_1$, % be given. If $\delta_1$ is fixed sufficiently small, and 
if the environment in $(-4   L\b, (M+1)L\b) \times {(-3 \sqrt{M} L\b, 3\sqrt{M} L\b)}^{d-1}$ is $(L\b, \delta_1)$-good, then for every $x \in (-  L\b/2, L\b/2) \times {(- \sqrt{M} L\b/2, \sqrt{M}L\b/2)}^{d-1}$, we have
$$
\PP_x\Ll[A (M, \epsilon_{0}, \delta)\Rr] \geq  \frac{c_1}{3} e ^{-M{\sqrt d}/2}.
$$
\end{prop}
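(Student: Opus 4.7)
My plan is to derive Proposition~\ref{cor55} from Lemma~\ref{six} by a Markov-inequality application of Lemma~\ref{l:annulus}. With $h = 1/\sqrt{d}$, conditions (i) and (ii) in the definition of $A(M, \epsilon_0, \delta)$ will be essentially ``free'' consequences of the event $\td{A}_M$ produced by Lemma~\ref{six}; the substantive task is to show that condition~(iii) holds on all but a small portion of $\td{A}_M$.

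First, I would verify that $\td{A}_M$ forces conditions (i) and (ii) of $A(M, \epsilon_0, \delta)$. On $\td{A}_M$ the walk hits $\{M L\b\} \times (\sqrt{M}L\b/2, 3\sqrt{M}L\b/2) \times (-\sqrt{M}L\b/2, \sqrt{M}L\b/2)^{d-2}$ at some time $\tau \le \sigma_{N}$ with $N = \lceil M\sqrt{d}(1+\epsilon)/\epsilon_0^2\rceil$, while remaining inside the box of (ii). Because the walk moves by at most $\eps_0 L\b$ between two consecutive $\sigma_j$, the preceding index $j_0$ (with $\sigma_{j_0-1} < \tau \le \sigma_{j_0}$) satisfies $X_{\sigma_{j_0 - 1}} \in ((M-\eps_0)L\b, ML\b) \times (\text{transverse targets})$, hence $F \le j_0 - 1 \le N$ and (i)--(ii) hold (transverse inclusions use $\eps_0 \ll \sqrt{M}$). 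Lemma~\ref{six} then yields $\PP_x[\td{A}_M] \ge (c_1/2)\, e^{-M\sqrt{d}/2}$.

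Next I would handle (iii). For any $y$ such that $B(y, L\b(\epsilon_0 + 3\delta))$ lies in the good region, Lemma~\ref{l:annulus} gives
$$
p(y) := \PP_y\!\left[X \text{ hits an important point in } B(y, L\b(\eps_0 + 2\delta)) \setminus B(y, L\b(\eps_0 - \delta)) \right] \le K\delta\eps_0.
$$
Writing $p(y) = \EE_y[q_y(X_\sigma)]$, where $q_y(z)$ is the conditional probability given $X_\sigma = z$, Markov's inequality yields
$$
\PP_y\!\left[q_y(X_\sigma) > \eps_0^2\delta^{1/3}\right] \le \frac{K\delta\eps_0}{\eps_0^2\delta^{1/3}} = \frac{K\delta^{2/3}}{\eps_0},
$$
i.e., under $\PP_y$ the pair $(y, X_\sigma)$ is non-generic with probability at most $K\delta^{2/3}/\eps_0$. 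Applying the strong Markov property at $\sigma_j$ and summing over $j \le N$, together with the fact that on $\td{A}_M$ every $X_{\sigma_j}$ lies in a region whose $L\b(\eps_0+3\delta)$-neighbourhood is good, I obtain
$$
\PP_x\!\left[\td{A}_M \cap \{\exists\, j \le F : (X_{\sigma_j}, X_{\sigma_{j+1}}) \text{ non-generic}\}\right] \le \frac{KN\delta^{2/3}}{\eps_0} = O\!\left(\frac{M\delta^{2/3}}{\eps_0^3}\right).
$$
Choosing $\ov\delta = \ov\delta(M, \eps_0)$ small enough that this bound is at most $(c_1/6)\, e^{-M\sqrt{d}/2}$, and then $\ov\delta_1$ small enough for Lemma~\ref{l:annulus} to apply uniformly along the trajectory, the conclusion follows by subtraction.

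The main obstacle will be the bookkeeping of spatial inclusions: one must verify that the good box $(-4L\b, (M+1)L\b) \times (-3\sqrt{M}L\b, 3\sqrt{M}L\b)^{d-1}$ contains \emph{every} ball $B(X_{\sigma_j}, L\b(\epsilon_0 + 3\delta))$ encountered on $\td{A}_M$, uniformly in $j \le N$, so that Lemma~\ref{l:annulus} can be invoked at each stopping time. This is precisely what dictates the choice of the generous factors $3$ and $4$ in the definition of the good region, and it underpins the qualitative hierarchy $\delta_1 \ll \delta \ll \eps_0$ already fixed in the paper.
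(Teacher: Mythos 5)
Your proposal is correct and follows essentially the same route as the paper's proof: Lemma~\ref{l:annulus} combined with Markov's inequality to bound the probability that a pair $(X_{\sigma_j},X_{\sigma_{j+1}})$ fails to be generic, then a union bound via the strong Markov property over the at most $(1+\eps)M\sqrt{d}\,\eps_0^{-2}$ relevant indices, and finally subtraction from the lower bound of Lemma~\ref{six} after choosing $\ov{\delta}$ small in terms of $M$, $\eps_0$ and $c_1$. The only differences are cosmetic bookkeeping (the paper first absorbs $K\delta^{2/3}/\eps_0$ into $\eps_0^2\delta^{1/3}$ before summing, whereas you sum the raw bound), and your explicit verification that $\td{A}_M$ implies parts (i)--(ii), which the paper leaves implicit.
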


\begin{proof}
Let $K$ ($=K(\eps)$) be given by Lemma~\ref{l:annulus}, and let $0 < \delta < \epsilon _0 /2$. We write $\mcl{E}(\eps_0,\delta)$ for the event that the random walk hits an important point in $B(X_0, L_\lambda (\epsilon_0 + 2\delta ))  \backslash B(X_0, L_\lambda (\epsilon_0 - \delta )) $. By Lemma~\ref{l:annulus}, if the environment in $(-4   L\b, (M+1)L\b) \times {(-3 \sqrt{M} L\b, 3\sqrt{M} L\b)}^{d-1}$ is $(L\b, \delta_1)$-good and $\delta_1$ is small enough, then for any $x $ in  $\big(-3L\b, ML\b\big) \times \big(- \sqrt{M} L\b, 2 \sqrt{M} L\b\big)\times \big(- \sqrt{M} L\b,  \sqrt{M} L\b\big)^{d-2}$, 
$$
\PP_x\Ll[ \mcl{E}(\eps_0,\delta) \Rr] \le K \delta \eps_0.
$$
As a consequence, the probability that 
$$
\PP_x[\mcl{E}(\epsilon_0,\delta)  \ \vert \ X_\sigma ] \geq  \epsilon_0 ^2 \delta^{1/3} 
$$
is bounded by 
$$ 
\frac{ K \delta \epsilon_0}{ \epsilon_0 ^2 \delta^{1/3} } \ = \ \frac{ K  \delta^{2/3}}{ \epsilon_0  } \ < \ \epsilon_0 ^2 \delta^{1/3} ,
$$
provided that $ \delta < \ov{\delta} \leq {\epsilon_0^9}/{K^3}$.
Thus, we have for $x $ as above,
$$
\PP_x[(x, X_\sigma ) \mbox{ is not generic} ] \ < \ \epsilon_0^2 \delta^{1/3}.
$$

Let $\mcl{E}^i(\epsilon_0, \delta) $ be the event that $(X_{\sigma_i}, X_{\sigma_{i+1}})$ is not generic.  By the strong Markov property, 
the probability that there exists $ i \leq (1+\epsilon) M \epsilon_0^{-2} \sqrt{d} $ such that 
$$
\left\{
\begin{array}{l}
X_{\sigma_i}  \in \big(-3L\b, ML\b\big) \times \big(- \sqrt{M} L\b, 2 \sqrt{M} L\b\big)\times \big(- \sqrt{M} L\b,  \sqrt{M} L\b\big)^{d-2} \quad \text{ and } \\ 
 \mcl{E}^i(\epsilon_0, \delta) \text{ occurs}
\end{array}
\right.
$$
is bounded by 
$$ (1+\epsilon) M\sqrt{d}\delta^{1/3} .
$$
The result then follows provided $\ov{\delta}$ is less than $\left( \frac{c_1}{6M \sqrt{d} (1+ \epsilon)} e ^{-M{\sqrt d}/2} \right)^3$, using Lemma~\ref{six} with $h = d^{- 1/2}$. 
\end{proof}
\begin{rem}
We wish to emphasize that the event $A (M, \epsilon_{0}, \delta)$ does not (explicitly) depend on the actual hitting time of the hyperplane $\{M L\b\} \times \IZ^{d-1}$ other than through the rough clock provided by the $\sigma_i$s. %is measurable with respect to the coarse-grained trajectory $(X_{\sigma_j})_{j \in \N}$. 
\end{rem}
\begin{rem}
The part (iii) in the definition of $A (M, \epsilon_{0}, \delta)$
enables us to conclude that with high probability 
on the event $A (M, \epsilon_{0}, \delta)$  the important points 
within $\delta L\b $ of the points $X_{\sigma_i}$ may be ignored.
\end{rem}

The next major point is to examine the killing 
probabilities as the random walk passes from $X_{\sigma_i }$ to $X_{\sigma_{i+1} }$.
We have (see \cite{LL}) that for $T_x \ = \ \inf \{n: X_n = x \}$,
$${|x|}^{d-2} \PP_0 (T_{x} < \infty) \to c$$
for some $c \in (0, \infty )$.
From this it follows that 
\begin{lemma} \label{lll}
Let $\tau_r \ = \ \inf \{ n: |X_n| > r \}$. There exists a continuous $\phi : [0,1] \to \R$, strictly positive on $(0,1)$, such that
for every $\eta > 0$, uniformly on $\frac{\vert x \vert}{r}  \in \ (\eta, 1-\eta)$, we have
$$
|x|^{d-2}\PP_0[T_{x} < \tau_{r}] - \phi \left( \frac{\vert x \vert}{r} \right) \to 0
$$
as $r$ tends to infinity.
\end{lemma}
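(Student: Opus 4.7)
The plan is to decompose by the strong Markov property at $\tau_r$:
$$\PP_0[T_x<\infty] = \PP_0[T_x<\tau_r] + \E_0\Ll[\PP_{X_{\tau_r}}[T_x<\infty]\,\1_{\tau_r<T_x}\Rr].$$
Multiplying by $|x|^{d-2}$, the left-hand side tends to $c$ by the displayed asymptotic, uniformly in $|x|/r \in (\eta,1-\eta)$ since $|x| \ge \eta r \to \infty$. Since $\PP_0[T_x<\tau_r] \le \PP_0[T_x<\infty] \to 0$, the indicator $\1_{\tau_r<T_x}$ may be dropped at cost $o(1)$. On the exit event, $|X_{\tau_r}-x| \ge r-|x| \ge \eta r$, so by translation invariance the same asymptotic gives $\PP_y[T_x<\infty] = c\,|y-x|^{2-d}(1+o(1))$ uniformly for $|y| \ge r$.

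Setting $\rho = |x|/r$ and $u = x/|x|$, this reduces the error term to
$$c\,\rho^{d-2}\,\E_0\Ll[\Ll|X_{\tau_r}/r - \rho u\Rr|^{2-d}\Rr] + o(1).$$
By the invariance principle, together with a continuous mapping argument for exit from a ball, the law of $X_{\tau_r}/r$ under $\PP_0$ converges weakly to the exit distribution of rotationally symmetric Brownian motion from the open unit ball started at $0$, which by symmetry is the normalized surface measure $\sigma$ on the sphere $S^{d-1}$. Since $z \mapsto |z - \rho u|^{2-d}$ is continuous and bounded uniformly for $z \in S^{d-1}$ and $\rho \in (\eta, 1-\eta)$ (the denominator stays $\ge \eta$), passing to the limit yields $c\,\rho^{d-2} \int_{S^{d-1}} |z - \rho u|^{2-d}\,\d\sigma(z)$.

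The final step is Newton's theorem: the spherical average $y \mapsto \int_{S^{d-1}} |z-y|^{2-d}\,\d\sigma(z)$ is harmonic off $S^{d-1}$, rotationally symmetric and bounded on the open unit ball, hence constant, with value $1$ at the origin. So the integral equals $1$ for every $\rho < 1$, yielding $|x|^{d-2} \PP_0[T_x<\tau_r] \to c(1-\rho^{d-2})$. Setting $\phi(\rho) := c(1-\rho^{d-2})$ gives a continuous function on $[0,1]$, strictly positive on $(0,1)$. The main delicate point will be ensuring both inputs (the hitting asymptotic and the weak convergence of $X_{\tau_r}/r$) are \emph{uniform} in $\rho \in (\eta, 1-\eta)$, which is standard but requires a word since $|x|$ and $r$ diverge jointly at comparable rates.
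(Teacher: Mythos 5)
Your proof is correct and follows essentially the same route the paper sketches (via the reference to Ben Arous--\v{C}ern\'y): decompose $\{T_x<\infty\}$ at the exit time $\tau_r$, use the asymptotic uniformity of $X_{\tau_r}/r$ on the sphere, and evaluate the resulting spherical average by Newton's theorem. Your explicit formula $\phi(\rho)=c\,(1-\rho^{d-2})$ is moreover consistent with the normalization \eqref{integr1} that the paper uses afterwards.
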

Indeed, the lemma can be obtained (with an explicit expression for $\phi$) by noting that the law of the random walk when hitting the sphere is asymptotically uniformly distributed, and decomposing the event of touching $x$ according to whether it occurs before or after hitting the sphere. We refer to \cite[Lemma~A.2]{bc} for details. The only important point for us is that since $\EE_0(\tau_{r})/r^{2} \xrightarrow[r \to \infty]{} 1,$ we have 
\begin{equation}
\label{integr1}
\int_{D(0,1)} \frac{\phi(|v|)}{q_d \, |v|^{d-2}} \ {\d v} = 1,
\end{equation}
where $D(0,1)$ is the unit ball in $\IR^d$. The following lemma follows from this observation.

\begin{lemma}
If $\delta_{1}$ is fixed sufficiently small (in terms of $\delta$, $\eps$ and $\eps_0$) and if $B(0,\epsilon_{0} L\b )$ is an $(L\b, \delta_1)$-good environment, then for every $j$ such that $ I_{j}$ is relevant,
\begin{multline*}
\sum_{ \delta L\b \leq \vert x \vert \leq ( \epsilon_{0} - \delta) L\b}
\PP_{0}[X \text{ hits } x \text{ before time } \sigma] \ \1_{V(x) \in I_{j}} \\
\leq (1+3\epsilon)  \epsilon_0^2 L\b^{2}q_d \ \P[V(0) \in I_{j}]  , 
\end{multline*}
while
\begin{equation*}
\sum_{j:I_{j}\text{ is irrel.}} \  \sum_{ \delta L\b \leq \vert x \vert \leq (\epsilon_{0} - \delta_{}) L\b} \PP_{0}[X \text{ hits } x \text{ before time } \sigma] \ \1_{V(x) \in I_{j}} \le 5 \epsilon_0^2 \epsilon ^6.
%\\ \leq 3 \epsilon^{2} \int_{0}^{\epsilon_{0}}  \Gamma_{d}\hspace{0.2cm} \frac{y^{d-1}}{y^{d-2}} \ \d {y} \ \leq 
\end{equation*}
\end{lemma}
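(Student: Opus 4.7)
Since $X_0 = 0$, the stopping time $\sigma$ in \eqref{defsigma} coincides with $\tau_{\epsilon_0 L_\lambda}$ from Lemma~\ref{lll}. My plan is to partition the annular region $A := \{x : \delta L_\lambda \le |x| \le (\epsilon_0-\delta) L_\lambda\}$ into the mesoscopic cubes $C_{\underline{n},\delta_1}$, replace the hitting probability on each cube by the asymptotic formula from Lemma~\ref{lll}, bound the number of points with $V(x)\in I_j$ in each cube by the good-environment hypothesis, and identify the resulting Riemann sum with the integral appearing in \eqref{integr1}.

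More precisely, for $x \in A$ one has $|x|/(\epsilon_0 L_\lambda) \in [\delta/\epsilon_0, 1-\delta/\epsilon_0]$, and Lemma~\ref{lll} yields
\[
\PP_0[T_x < \sigma] = (1+o_\lambda(1))\, \frac{\phi\!\left(|x|/(\epsilon_0 L_\lambda)\right)}{|x|^{d-2}}
\]
uniformly over $x\in A$ as $\lambda \to 0$. Since $\phi$ is uniformly continuous on the compact subinterval $[\delta/(2\epsilon_0),1-\delta/(2\epsilon_0)]$ and the integrand $\phi(|v|)/|v|^{d-2}$ is bounded there, choosing $\delta_1$ small enough in terms of $\delta$, $\eps$ and $\epsilon_0$ makes both $|x|^{d-2}$ and $\phi(|x|/(\epsilon_0 L_\lambda))$ vary by a factor of at most $1+\eps$ across each mesoscopic cube $C_{\underline n,\delta_1}\subset A$.

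For a relevant $I_j$, the good-environment bound \eqref{goodrel} gives at most $(1+\eps)(\delta_1 L_\lambda)^d\, \P[V(0)\in I_j]$ points with $V(x)\in I_j$ in each $C_{\underline n,\delta_1}$. Summing the contributions and viewing $(\delta_1 L_\lambda)^d$ as the volume element of a Riemann sum, I get, up to a factor $(1+\eps)^2(1+o_\lambda(1))$,
\[
\P[V(0)\in I_j]\int_{A}\frac{\phi(|x|/(\epsilon_0 L_\lambda))}{|x|^{d-2}}\,\d x
= \epsilon_0^2 L_\lambda^2\, \P[V(0)\in I_j]\int_{\delta/\epsilon_0 \le |v|\le 1-\delta/\epsilon_0}\!\frac{\phi(|v|)}{|v|^{d-2}}\,\d v,
\]
after the substitution $v=x/(\epsilon_0 L_\lambda)$. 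By \eqref{integr1}, the remaining integral is at most $q_d$. Absorbing $(1+\eps)^2(1+o_\lambda(1))$ into $(1+3\eps)$ for $\lambda$ small and $\eps$ small yields the first claim.

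For irrelevant $I_j$, the same argument applied with \eqref{goodirr} in place of \eqref{goodrel} produces the bound $2(1+o_\lambda(1))\,\eps_0^2\eps^9 q_d$ per interval. Since the number of intervals is $R+2 \le C\eps^{-2}$ and $q_d \le 1$, summing over irrelevant $j$ yields at most $C'\eps_0^2\eps^7 \le 5\eps_0^2\eps^6$ for $\eps$ small enough, which is the second claim.

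The main technical point is the Riemann-sum approximation: one needs $\delta_1$ small enough (in terms of $\delta$, controlling the variation of $\phi$ and of $|\cdot|^{2-d}$ on each mesoscopic cube) so that the per-cube error and the boundary cubes straddling $\partial A$ together contribute only a factor $1+\eps$. This is where the hierarchy $\delta_1 \ll \delta \ll \epsilon_0$ is used.
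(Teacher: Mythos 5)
Your proposal is correct and follows essentially the same route as the paper: cover the annulus by the mesoscopic cubes $C_{\underline{n},\delta_1}$, replace $\PP_0[T_x<\sigma]$ by $\phi(|x|/(\eps_0 L\b))\,|x|^{2-d}$ uniformly via Lemma~\ref{lll}, invoke \eqref{goodrel} (resp.\ \eqref{goodirr}) per cube, and recognize the Riemann sum for the integral in \eqref{integr1}. One small slip: the number of intervals is of order $\eps^{-3}$, not $C\eps^{-2}$ (the window $[\eps/\lambda,1/(\eps\lambda))$ is cut into pieces of length $\eps^2/\lambda$), but this is harmless since the per-interval contribution $2q_d\eps_0^2\eps^9(1+O(\eps))$ times $\eps^{-3}$ still gives at most $5\eps_0^2\eps^6$.
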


\begin{proof}
We begin with the proof concerning relevant intervals. Note first that
\begin{multline}
\label{eq1}
\sum _ {\delta L\b \leq \vert x \vert \leq ( \epsilon_{0} - \delta) L\b}  \PP_0[ T_x < \sigma ] \ \1_{V(x) \in I_{j}} \\
\leq
\sum_{ \substack{\un{n} \in \Z^d : \\ C_{\underline{n},\delta_1} \cap \Ll(B(0, (\epsilon_0 -\delta) L_\lambda) \backslash B(0, \delta L_\lambda) \Rr)\ne \emptyset }} \  \sum _ {x \in C_{\underline{n},\delta_1} : V(x) \in I_j}  \PP_0[ \tau_x < \sigma ] .
\end{multline}
For $ \delta_1 < \delta /3d $, if $C_{\underline{n},\delta_1}  $ intersects $B(0, (\epsilon_0 -\delta) L_\lambda) \backslash B(0, \delta L_\lambda)$,
then necessarily $ C_{\underline{n},\delta_1}  \subset B(0, (\epsilon_0 -\delta/2) L_\lambda) \backslash B(0, \delta L_\lambda / 2)$.  By Lemma~\ref{lll}, 
$$
\Ll| |x|^{d-2} \PP_0[T_x < \sigma] - \phi\Ll(\frac{|x|}{\eps_0 L\b}\Rr) \Rr|
$$ 
tends to $0$ as $\lambda$ tends to $0$, uniformly over all $x$ in $B(0, (\epsilon_0 -\delta/2) L_\lambda) \backslash B(0, \delta L_\lambda / 2)$. These observations imply that the right-hand side of \eqref{eq1} is asymptotically equivalent to
$$
\sum_{ \substack{\un{n} \in \Z^d : \\ C_{\underline{n},\delta_1} \cap \Ll(B(0, (\epsilon_0 -\delta) L_\lambda) \backslash B(0, \delta L_\lambda) \Rr)\ne \emptyset }} \  \sum _ {x \in C_{\underline{n},\delta_1} : V(x) \in I_j} \phi\Ll(\frac{|x|}{\eps_0 L\b}\Rr)|x|^{2-d}
$$
as $\lambda$ tends to $0$. For $\delta_1$ sufficiently small (and since $\phi$ is continuous), this is smaller than 
%Let
%$$
%\omega_{\phi}(\eta) = \sup\{\phi(z) - \phi(w): |z-w| \leq  \eta \}.
%$$
%For $x$ and $y $ in such a $C_{\underline{n},\delta_1}$, 
%we have 
%\begin{multline*}
%\Ll| \PP_0[T_x < \sigma] - \PP_0[T_y < \sigma]   \Rr| \\
%\leq 
%\frac{ \omega_{\phi}(d\delta_1/\delta) } {(\delta L_\lambda/2)^{d-2} } \ + \ \frac{2 } {(\delta L_\lambda)^{d-2} }  \sup_{\delta L\b/2 \leq \vert z \vert \leq (\epsilon_{0} - \delta_{}/2) L\b}
%\Ll| \phi \Ll(\frac{|z|}{\epsilon_0 L_\lambda }\Rr) - \PP_0[T_z < \sigma] (\epsilon_0 L_ \lambda )^{d-2} \Rr| ,
%\end{multline*}
%where $  \omega_{\phi , d\delta_1} $ equals $\sup\{\phi(z) - \phi(w): |z-w| \leq  d\delta_1\}$.
%From Lemma \ref{lll} and \eqref{integr1}, it follows that, for $\delta_1$ small enough,
%$$ 
%\sum _ {x \in C_{\underline{n},\delta_1} : V(x) \in I_j}  \PP_0[ \tau_x < \sigma ]  \leq 
$$
\sum_{ \substack{\un{n} \in \Z^d : \\ C_{\underline{n},\delta_1} \cap \Ll(B(0, (\epsilon_0 -\delta) L_\lambda) \backslash B(0, \delta L_\lambda) \Rr)\ne \emptyset }} \! \! \! \! \! \! \! \! (1+ {\epsilon} ) \frac{|\{x \in C_{\underline{n},\delta_1} : V(x) \in I_j \}| }{ (\epsilon_0L_\lambda )^{d-2}} \ \frac{1}{|D_{\underline{n},\delta_1}|  }\int_{D_{\underline{n},\delta_1} } \frac{\phi(|v|)}{|v|^{d-2}} \ \d v,
$$
where $D_{\underline{n},\delta_1} $ is the (continuous) rectangle corresponding to $\frac{C_{\underline{n},\delta_1}}{\epsilon_0 L_\lambda }$, and $|D_{\underline{n},\delta_1}|$ is its Lebesgue measure, that is, $(\delta_1/\eps_0)^d$.
Using the fact that
our environment is $(L_\lambda , \delta_1)$-good (see \eqref{goodrel}) and \eqref{integr1}, we obtain that
$$ 
\sum _ {\substack{\delta L\b \leq \vert x \vert \leq ( \epsilon_{0} - \delta) L\b \\ V(x) \in I_j}}  \PP_0[ T_x < \sigma ]  \leq 
(1+ 3 \eps) \epsilon_0^2 L\b^{2}q_d \ \P[V(0) \in I_{j}],
$$
for all $\lambda$ sufficiently small, as announced.

The proof for irrelevant intervals is the same, except that one uses \eqref{goodirr} and the fact that there are no more than $\eps^{-3}$ irrelevant intervals to conclude.
\end{proof}

In fact given the Harnack principle for random walks (see \cite{LL}) one has 

\begin{lemma} \label{har}
Let $ \ \delta, \ \epsilon_{0}, \ \delta_{1}$ be as in the previous lemma. If $B(0,\epsilon_{0} L\b )$ is an $(L\b, \delta_1)$-good environment, then for every $\lambda $ sufficiently small (i.e.\ for $L\b$ sufficiently large) and every $y \in \partial B(0, L\b \epsilon_{1})$,
\begin{multline*}
\sum_{\delta L\b \leq \vert x \vert \leq (\epsilon_0 - \delta) L\b}\PP_{0}[X \text{ hits } x \text{ before time } \sigma \ \vert \ X _\sigma =y] \ \1_{V(x) \in I_{j}} \\
\leq (1+3\epsilon) \epsilon_0^2 L\b^{2}q_d  \ \P[V(0) \in I_{j}]
\end{multline*}
while
\begin{multline*}
\hspace{-0.3cm} \sum_{j: I_{j}\text{ is irrel.}} \ \sum_{\epsilon_0 \delta L\b \leq \vert x \vert \leq (\epsilon_{0}(1 - \delta)) L\b} 
\PP_{0}[X \text{ hits } x \text{ before time } \sigma  \ | \ X _\sigma =y] \ \1_{V(x) \in I_{j}} \\
%\leq 3 \epsilon^{2} \int_{0}^{\epsilon_{0}} \gamma_{d} \Gamma_{d}\ \frac{y^{d-1}}{y^{d-2}} \ \d {y}.
\le 5 \eps_0^2 \eps^6.
\end{multline*}
\end{lemma}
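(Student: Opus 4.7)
The plan is to reduce the conditional probability $\PP_0[T_x < \sigma \mid X_\sigma = y]$ to its unconditional counterpart, which is already controlled by the previous lemma, via the strong Markov property and the discrete Harnack inequality for simple random walk. (We read the $\eps_1$ in the statement as $\eps_0$, so that $y$ lies on the exit sphere $\partial B(0, \eps_0 L\b)$.)

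First, applying the strong Markov property at the hitting time $T_x$, for any $x$ with $\delta L\b \le |x| \le (\eps_0 - \delta) L\b$,
\begin{equation*}
\PP_0[T_x < \sigma,\ X_\sigma = y] \ = \ \PP_0[T_x < \sigma]\ \PP_x[X_\sigma = y],
\end{equation*}
where on the right-hand side $\sigma$ denotes the exit time of the walk from $B(0, \eps_0 L\b)$ when started at $x$. Dividing by $\PP_0[X_\sigma = y]$ yields
\begin{equation*}
\PP_0[T_x < \sigma \mid X_\sigma = y] \ = \ \PP_0[T_x < \sigma] \cdot \frac{\PP_x[X_\sigma = y]}{\PP_0[X_\sigma = y]}.
\end{equation*}

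Next, the map $z \mapsto \PP_z[X_\sigma = y]$ is a non-negative discrete harmonic function on $B(0, \eps_0 L\b)$. Since the constraint $|x| \le (\eps_0 - \delta) L\b$ keeps $x$ at a fixed fraction of the radius away from the boundary, the Harnack inequality for simple random walk (see \cite{LL}) supplies a constant $C_H$, depending only on the geometric ratio $\delta/\eps_0$ and in particular independent of both $\lambda$ and $y$, such that
\begin{equation*}
\PP_x[X_\sigma = y] \ \le \ C_H \, \PP_0[X_\sigma = y].
\end{equation*}

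Combining these two ingredients gives the pointwise bound $\PP_0[T_x < \sigma \mid X_\sigma = y] \le C_H \, \PP_0[T_x < \sigma]$. Summing over $x$ weighted by $\1_{V(x) \in I_j}$ for relevant $I_j$, or summing over all $x$ with $V(x)$ in any irrelevant interval, and invoking the previous lemma immediately produces the two claimed inequalities, with $C_H$ absorbed into the prefactors on the right-hand side once $\delta$, $\eps_0$, and $\eps$ are taken small enough (if one wants the precise constants $(1+3\eps)$ and $5$, one can instead exploit the convergence of the discrete Poisson-kernel ratio to its Brownian counterpart, which is a bounded continuous function on the relevant annulus, and choose the small parameters accordingly). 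The main, and quite mild, obstacle is verifying that $C_H$ can be chosen independently of $\lambda$ and of the boundary point $y$; this is exactly the content of the classical discrete Harnack inequality at a fixed scale ratio, and requires no new input beyond \cite{LL}.
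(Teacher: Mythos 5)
Your opening identity
\begin{equation*}
\PP_0[T_x<\sigma \mid X_\sigma=y]\;=\;\frac{\PP_x[X_\sigma=y]}{\PP_0[X_\sigma=y]}\;\PP_0[T_x<\sigma]
\end{equation*}
is exactly the paper's starting point (with $h^y(u)=\PP_u[X_\sigma=y]$). The gap is in how you then control the harmonic ratio $h^y(x)/h^y(0)$. The Harnack constant $C_H$ you invoke depends on the ratio $\delta/\eps_0$ of the distance from $x$ to the boundary to the radius of the ball, and it \emph{blows up} as $\delta/\eps_0\to 0$: already for the limiting Poisson kernel of the ball, $h^y(x)/h^y(0)$ is of order $(\eps_0/\delta)^{d-1}$ when $|x|=(\eps_0-\delta)L\b$ and $x$ points towards $y$. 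Since the paper's parameter hierarchy is $\delta_1\ll\delta\ll\eps_0$, this constant is large, and ``taking $\delta,\eps_0,\eps$ small enough'' makes it worse, not better; there is no room to absorb a factor $(\eps_0/\delta)^{d-1}$ into the prefactors $(1+3\eps)$ and $5$, and the lemma with a degraded constant would not feed correctly into Proposition~\ref{p11}. Your fallback --- convergence of the discrete ratio to its Brownian counterpart, which is ``bounded continuous'' --- suffers from the same problem: the limiting kernel ratio $R(x/(\eps_0 L\b),y/|y|)$ is continuous but nowhere near $1$ on the annulus, so no pointwise comparison with the unconditional probabilities can produce the stated constants.

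What is actually needed (and what the paper's argument reduces to, once the uniform convergence $h^y(x)/h^y(0)\to R$ is established via tightness, the invariance principle and Harnack) is an averaging rather than a pointwise bound. In a good environment the points with $V(x)\in I_j$ are equidistributed over the mesoscopic boxes, so the conditional sum is a Riemann sum for $\int R(v,e)\,\phi(|v|)\,|v|^{2-d}\,\d v$ over the rescaled annulus, i.e.\ the analogue of \eqref{integr1} with the extra weight $R$. The key cancellation is that this weight has average $1$ against the Green's-function measure: writing $G(0,x)$ for the expected number of visits to $x$ before $\sigma$, one has $\sum_x G(0,x)\,h^y(x)/h^y(0)=\EE_0[\sigma\mid X_\sigma=y]$, which by the rotational symmetry of the Brownian limit is asymptotically independent of $y$ and hence asymptotic to $\EE_0[\sigma]$. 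It is this identity, combined with the equidistribution of important points, that lets one keep the constants $(1+3\eps)$ and $5$; your proposal is missing it.
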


\begin{proof}[Proof]
The justification of the statements comes down to an analysis of the behaviour of $\PP_0[X \mbox{ hits } x \mbox{ before } \sigma \vert X_\sigma =y]$.  Letting $h^y(u)$ be the harmonic function
$\PP_u[ X_\sigma =y] $ for $ u \in  B(0,\eps_0 L\b)$, we have 
$$
\PP_0[X \mbox{ hits } x\mbox{ before } \sigma \vert X_\sigma =y] = \frac{h^y(x)}{h^y(0)} \PP_0[X \mbox{ hits } x\mbox{ before } \sigma ].
$$
It is enough to show that the quantity $ \frac{h^y(x)}{h^y(0)}  -R(\frac{x}{\epsilon L\b},\frac{y}{|y|}) $ converges to zero as $L\b$ becomes large where $R(.,.) $ is the Riesz kernel  (given the continuity of the Riesz kernel $R(.,.)$) .

The main point is that for $ 0 < \delta' < \delta $ and $ u \in B(0,(\epsilon_0 - \delta)) $ the distributions of 
$$
  \frac{X_{\tau_{(\epsilon_0 - \delta' )L\b}}- y}{ \delta' }
$$
are tight under the laws $\PP_u[ \ \cdot \ | X_\sigma= y]$.  This can be argued as in \cite{law}.   Simple bounds on $h^y (u)$ then show that for $K$ large 
$h^y(u) $ is close to $ \sum_{ |v-y| < K \delta^ \prime} \PP_u[X_{\tau_{(\epsilon_0 - \delta' )L\b}}=v] h^y(v) $.  We now fix $ \delta' \ll 1$ and apply the invariance principle as $L \rightarrow \infty $ in conjunction with the Harnack inequality for $h^y$ to get the desired result. 
\end{proof}

%\vspace{0.3cm}

As is well known, for a random walk in dimensions three and higher, given that a point is visited, the number of visits is geometric of parameter $q_d$.
From this, it is almost immediate that if a point is well distanced from the boundary of $B(0, \epsilon _0L\b )$ and is visited by a random walk before time $\sigma$, then the number of visits ought to be approximately geometric with parameter $q_d$.  The following is our formulation of this.
%Now it is elementary that (uniformly over $|x| \in (L\b  \delta, L\b (\epsilon_0 -\delta))$), given that for a random walk starting at the origin,
%$T_x < \tau_{\epsilon_0L\b} = \sigma $, the number of visits to site $x$ before time $\sigma $ tends to a geometric parameter $q_d$ random variable in distribution.
\begin{lemma} \label{geom}
For fixed $ \epsilon_0, \ \delta \ > \ 0$, there exists $ c(L\b,\epsilon_0, \delta ) $ tending to one as $L\b $ tends to infinity such that for all $x  \in  B(0, (\epsilon _ 0 - \delta )L\b ) \setminus B(0, \delta L\b) $,
$$
(1-q_dc(L\b,\epsilon_0, \delta ))^{r-1}  \leq \PP_0\Ll[N(x) \geq r \ | \ N(x) \geq 1\Rr] \le (1-q_d)^{r-1},
$$
where $N(x)$ is the number of visits to site $x$ before time $\sigma$.
\end{lemma}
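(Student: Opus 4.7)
The plan is to view successive returns to $x$ via the strong Markov property. Let $T_x^+ = \inf\{n \ge 1 : X_n = x\}$ and set
$$
p(x) = \PP_x\bigl[T_x^+ < \sigma\bigr],
$$
where $\sigma = \inf\{n \ge 0 : |X_n| \ge \eps_0 L\b\}$ as in \eqref{defsigma}. By applying the strong Markov property at each successive visit to $x$ (using that $\sigma$ is defined relative to the \emph{original} starting point $0$, so that conditionally on $X_n = x$ for some $n < \sigma$, the walk $(X_{n+k})_{k \ge 0}$ is again a simple random walk that must still avoid exiting $B(0,\eps_0 L\b)$ to contribute further visits), the number $N(x) - 1$ of returns after the first visit is, conditionally on $\{N(x) \ge 1\}$, geometrically distributed with parameter $1 - p(x)$. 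Consequently
$$
\PP_0\bigl[N(x) \ge r \mid N(x) \ge 1\bigr] \,=\, p(x)^{r-1}.
$$

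For the upper bound, any trajectory that returns to $x$ before $\sigma$ is in particular a trajectory that returns to $x$, so $p(x) \le \PP_x[T_x^+ < \infty] = 1 - q_d$, yielding the desired $(1-q_d)^{r-1}$.

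For the lower bound, I would decompose
$$
1 - p(x) \,=\, \PP_x[T_x^+ = \infty] + \PP_x\bigl[\sigma < T_x^+ < \infty\bigr] \,=\, q_d + \PP_x\bigl[\sigma < T_x^+ < \infty\bigr].
$$
Applying the strong Markov property at $\sigma$ and the Green's function estimate \eqref{boundgreen} (with $X_\sigma$ at distance $\ge \eps_0 L\b - |x| \ge \delta L\b$ from $x$ for $x \in B(0,(\eps_0-\delta)L\b)$),
$$
\PP_x\bigl[\sigma < T_x^+ < \infty\bigr] \,\le\, \sup_{|y| \ge \eps_0 L\b} \PP_y[T_x < \infty] \,\le\, K_1 (\delta L\b)^{2-d}.
$$
This bound is uniform in $x$ ranging over the prescribed annulus, and tends to $0$ as $L\b \to \infty$. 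Setting
$$
c(L\b,\eps_0,\delta) \,=\, \sup_{\delta L\b \le |x| \le (\eps_0-\delta)L\b} \frac{1-p(x)}{q_d} \,\le\, 1 + \frac{K_1 (\delta L\b)^{2-d}}{q_d},
$$
we have $c \to 1$ as $L\b \to \infty$, and by construction $1 - q_d c \le p(x)$ for every admissible $x$, which gives the lower bound $(1 - q_d c)^{r-1} \le p(x)^{r-1}$.

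The argument is entirely standard; there is no serious obstacle. The only point requiring a bit of care is to make sure that the strong Markov reduction uses the \emph{fixed} ball $B(0,\eps_0 L\b)$ at each excursion (not a ball recentered at $x$), which is what makes the return probability $p(x)$ position-dependent and forces the uniform supremum in the definition of $c$.
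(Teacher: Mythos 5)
Your proof is correct and follows essentially the same route as the paper's: the upper bound comes from dominating the constrained return probability by the unconstrained one ($p(x)\le 1-q_d$), and the lower bound from the decomposition $1-p(x)=q_d+\PP_x[\sigma<T_x^+<\infty]$ together with the strong Markov property at $\sigma$ and the hitting estimate \eqref{boundgreen}, which is exactly the paper's bound by $\sup_{y\in\partial B(0,\eps_0 L\b)}\PP_y[T_x<+\infty]$. Your version merely makes explicit the quantitative rate $K_1(\delta L\b)^{2-d}$ and the definition of $c(L\b,\eps_0,\delta)$, which the paper leaves implicit.
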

\begin{proof}
For the right-hand side we simply note that by the strong Markov property, given that the point $x$ is visited by a random walk, the number of visits to site $x$ is geometric with 
parameter $q_d$.  
%The inequality is strict because there is positive probability of returning to $x$ after time $\sigma$.  
For the left-hand side inequality, we need a lower bound on the probability that 
the random walk starting at $x$ returns there before time $\sigma$. We can decompose this probability as
$$
1- q_d - \PP_x[X \text{ hits } x \text{ after time } \sigma] \ge 1 - q_d - \sup_{y \in \partial B(0,\eps_0 L\b)} \PP_y[T_x < +\infty],
$$
and this finishes the proof.
\end{proof}
Lemma~\ref{geom} and convexity yield
\begin{prop} \label{p11}
Given $\epsilon_{0}, \delta, \delta_1$ (and $\eps$ fixed small enough), for all $\lambda$ small enough, if $B(0,\eps_0 L\b)$ is $(L\b, \delta_1)$-good, then
\begin{multline*}
\EE_{0} \left[ \exp\Ll({-\sum_{s < \sigma} \lambda V(X_{s})  \1_{V(X_s) \geq \frac{\epsilon}{\lambda }} \1_{X_s \in B(0, (\epsilon _ 0 - \delta )L\b ) \setminus B(0, \delta L\b) }} \Rr) \ \bigg| \  X_\sigma =y \right]\\
\geq 
\exp\Ll(-(1+4\epsilon)L\b^{2}\epsilon_{0}^{2}\int^{\infty}_{\frac{\epsilon}{\lambda}}    f(\lambda z) \ \d \mu(z) \Rr).
\end{multline*}
\end{prop}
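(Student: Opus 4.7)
My plan is to rewrite the exponent as $\sum_{x \in A}\lambda V(x) N(x)$, where $A$ is the set of important sites inside $B(0,(\eps_0-\delta)L\b)\setminus B(0,\delta L\b)$ and $N(x)$ counts the visits to $x$ before time $\sigma$, then lower-bound each single-site factor using Lemma~\ref{geom}, decouple the product via the strong Markov property, and identify the resulting sum as the desired integral through Lemma~\ref{har}. For a fixed $x \in A$, the upper bound in Lemma~\ref{geom} says that $N(x)$ conditioned on $\{T_x < \sigma\}$ is stochastically dominated by a geometric random variable $G$ starting at $1$ with parameter $q_d$. A direct calculation gives $\EE[e^{-\lambda V(x) G}] = q_d e^{-\lambda V(x)}/(1-(1-q_d)e^{-\lambda V(x)}) = 1 - f(\lambda V(x))/q_d$, and hence, with $p_x = \PP_0[T_x < \sigma \mid X_\sigma = y]$,
$$\EE_0\Ll[e^{-\lambda V(x) N(x)} \,\bigm|\, X_\sigma = y\Rr] \ \ge \ 1 - \frac{p_x\, f(\lambda V(x))}{q_d}.$$
(The conditioning on $X_\sigma = y$ is handled by a Harnack-type argument analogous to the one underlying Lemma~\ref{har}.)

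The principal step is to replace the expectation of the product over $A$ by a product of per-site expectations. Although the $N(x)$ are genuinely correlated through the trajectory, one can enumerate the important points in the order in which they are first visited and peel them off one at a time using the strong Markov property. At each peel, the lower bound in Lemma~\ref{geom} (with $c = c(L\b, \eps_0, \delta) \to 1$) guarantees that the geometric returns to the site being processed are essentially independent of the later portion of the walk, introducing only a multiplicative error $1 + o(1)$. Accumulated over the at most $O(\eps_0^2 L\b^2 \P[V \ge \eps/\lambda])$ important sites, this total error remains $1 + o(1)$ provided $c$ is sufficiently close to $1$, yielding
$$\EE_0\Ll[\prod_{x\in A}e^{-\lambda V(x)N(x)}\,\bigm|\, X_\sigma=y\Rr]\ \geq\ (1-o(1))\prod_{x\in A}\Ll(1 - \frac{p_x\, f(\lambda V(x))}{q_d}\Rr).$$

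It then remains to take $-\log$ and apply $-\log(1-u) \leq u(1+u)$ to bound $\sum_{x \in A} p_x f(\lambda V(x))/q_d$. Partition according to the interval $I_j$ containing $V(x)$: since $f(\lambda\,\cdot\,)$ varies only by $O(\eps)$ on each $I_j$, for relevant $I_j$ the estimate $\sum_{x : V(x) \in I_j} p_x \leq (1+3\eps)\eps_0^2 L\b^2 q_d \, \P[V(0) \in I_j]$ from Lemma~\ref{har} yields a total contribution of at most $(1+O(\eps)) \, \eps_0^2 L\b^2 \int_{\eps/\lambda}^\infty f(\lambda z)\, \d\mu(z)$. The irrelevant intervals contribute at most $5\eps_0^2\eps^6$, which is negligible by \eqref{c:assumption1}. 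Combining these estimates produces the $(1+4\eps)$ factor on the right-hand side.

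The main obstacle is the product decoupling in the second paragraph: because the $N(x)$ are genuinely dependent, the iterated strong Markov argument (or, equivalently, a loop decomposition) must be carried out while tracking the cumulative multiplicative error across the potentially many important points the walk visits. This is precisely why Lemma~\ref{geom} was stated with the quantitative parameter $c(L\b,\eps_0,\delta)\to 1$, rather than as a purely qualitative comparison.
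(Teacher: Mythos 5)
Your single-site computation (the geometric domination from Lemma~\ref{geom} giving $\EE_0[e^{-\lambda V(x)N(x)}\mid X_\sigma=y]\ge 1-p_x f(\lambda V(x))/q_d$) and your final summation over relevant and irrelevant intervals via Lemma~\ref{har} and \eqref{c:assumption1} are both correct and match what the paper does. The problem is the step you yourself flag as ``the main obstacle'': the factorization
$$
\EE_0\Ll[\prod_{x\in A}e^{-\lambda V(x)N(x)}\,\Big\vert\, X_\sigma=y\Rr]\ \ge\ (1-o(1))\prod_{x\in A}\EE_0\Ll[e^{-\lambda V(x)N(x)}\,\Big\vert\, X_\sigma=y\Rr].
$$
As written this is an assertion, not an argument. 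Peeling off the important points in order of first visit does not work by a naive application of the strong Markov property: the returns to the first-visited point $x_1$ are interleaved with the visits to all subsequent important points, so the ``geometric loops at $x_1$'' are not independent of the remaining $N(x)$'s --- the last exit from $x_1$ is not a stopping time, and erasing the loops at $x_1$ changes the visit counts at the other sites (a loop at $x_1$ may itself hit $x_2$). Making this rigorous would require, at minimum, a pair-correlation estimate showing that excursions from one important point rarely meet another, plus an argument that the resulting error is one-sided (a lower bound, not merely an approximation). None of this is supplied, and the quantitative parameter $c(L\b,\eps_0,\delta)$ in Lemma~\ref{geom} does not by itself control it. (A secondary slip: $O(\eps_0^2L\b^2\P[V\ge\eps/\lambda])$ is the expected number of important sites \emph{visited}; the number of important sites in the annulus is of order $L\b^{d-2}$, so the accumulation of errors is over a random, potentially large, number of peels.)

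The paper avoids decoupling altogether. It applies, pathwise, the subadditivity of $u\mapsto 1-e^{-u}$, namely $1-e^{-(u+v)}\le(1-e^{-u})+(1-e^{-v})$, to get
$$
1-\EE_0\Ll[\prod_{x}e^{-\lambda V(x)N(x)}\,\Big\vert\, X_\sigma=y\Rr]\ \le\ \sum_{x}\EE_0\Ll[1-e^{-\lambda V(x)N(x)}\,\Big\vert\, X_\sigma=y\Rr],
$$
and each summand is then exactly your single-site quantity; the target lower bound follows from $1-u\ge e^{-(1+\eps)u}$ once the total sum is checked to be small (which it is, since $L\b^2\int_{\eps/\lambda}^\infty f(\lambda z)\,\d\mu(z)\le 1$ and $\eps_0$ is small). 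If you replace your second paragraph by this one-line inequality, the rest of your argument goes through essentially unchanged.
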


\begin{proof}
%We write $\sum_{s < \tau_{\epsilon_0 L\b}} \lambda V(X_{s}) \1_{X_s \in \ B(0, (\epsilon _ 0 - \delta )L\b ) \setminus B(0, \delta L\b) }$ as $ \sum _{x\in \ B(0, (\epsilon _ 0 - \delta )L\b ) \setminus B(0, %\delta L\b): N(x) > 0} \lambda N(x) V(x) $.
Since for all $u$ and $v$ positive, $1-e^{-u+v}  \leq 1-e^{-u} + 1-e^{-v}$, we have 
\begin{multline}
\label{e:p11}
1- \EE_{0} \left[ \exp\Ll(-\sum_{s < \sigma} \lambda V(X_{s}) \1_{V(X_s) \geq \frac{\epsilon}{\lambda }} \1_{X_s \in  B(0, (\epsilon _ 0 - \delta )L\b ) \setminus B(0, \delta L\b) }\Rr) \ \bigg| \  X_\sigma =y \right] \\
%\leq \sum _{\substack{x \in B(0, (\epsilon _ 0 - \delta )L\b ) \setminus B(0, \delta L\b) \\ V(x) \ge \frac{\eps}{\lambda}}} \ \sum_{r \geq 1} (1- e^{-rV(x)}) \PP_0\Ll[N(x) = r \ | \  X_\sigma =y \Rr].
\leq \sum _{\substack{x \in B(0, (\epsilon _ 0 - \delta )L\b ) \setminus B(0, \delta L\b) \\ V(x) \ge \frac{\eps}{\lambda}}}  \EE_0\Ll[1- e^{-\lambda N(x) V(x)}  \ | \  X_\sigma =y \Rr].
\end{multline}
If we restrict the sum above to those $x$ such that $V(x) \in I_j$, where $I_j = [a_j,b_j)$ is a relevant interval, then we can use Lemmas \ref{har} and \ref{geom} to bound the sum by
\begin{multline*}
(1+3\epsilon) \epsilon_0^2 L\b^{2}q_d  \ \P[V(0) \in I_{j}] \Ll( 1-e^{- \lambda b_j} \sum_{r = 0}^{+\infty} (1-q_d)^r q_d e^{- r \lambda b_j} \Rr) \\
= (1+3\epsilon) \epsilon_0^2 L\b^{2} \ \P[V(0) \in I_{j}] \ q_d \frac{1-e^{- \lambda b_j}}{1-(1-q_d)e^{- \lambda b_j}}.
\end{multline*}
Similarly, in the right-hand side of \eqref{e:p11}, the sum restricted to those $x$ such that $V(x)$ belongs to some irrelevant interval is bounded by $5 \eps_0^2 \eps^6$. In total, the right-hand side of \eqref{e:p11} is thus bounded by
$$
5 \eps_0^2 \eps^6 + (1+3\epsilon) \epsilon_0^2 L\b^{2} \sum_{j : I_j \text{ is rel.}} \P[V(0) \in I_{j}] \ q_d \frac{1-e^{- \lambda b_j}}{1-(1-q_d)e^{- \lambda b_j}}.
$$
Using \eqref{c:assumption1}, one can check that this is smaller than 
$$
\Ll(1+\frac{7\epsilon}{2}\Rr)L\b^{2}\epsilon_{0}^{2}\int^{\infty}_{\frac{\epsilon}{\lambda}}    \frac{ q_d (1- e^{- \lambda z} ) }{1-(1- q_d) e^{- \lambda z}} \ \d \mu(z),
$$
and the result is then obtained provided $\eps_0$ is sufficiently small, since, by the definition of $L\b$ in \eqref{defL},
$$
L\b^{2} \int^{\infty}_{\frac{\epsilon}{\lambda}}    \frac{ q_d (1- e^{- \lambda z} ) }{1-(1- q_d) e^{- \lambda z}} \ \d \mu(z) \le 1.
$$
\end{proof}

%This and Proposition~\ref{cor55} immediately begets

\begin{cor}  
\label{cor56}
For $ \epsilon$, $\epsilon_0 $ and $ \delta $ small enough and then $\delta_1$ chosen small enough, if 
\begin{equation}
\label{e:cor56}
(-4   L\b, (M+1)L\b) \times {(-3 \sqrt{M} L\b, 3\sqrt{M} L\b)}^{d-1}
\end{equation}
is $(L\b, \delta_1)$-good, then for every $x \in (-L\b/2,L\b/2)\times(-\sqrt{M}L\b/2,\sqrt{M}L\b/2)^{d-1}$, %associated variables
% $F, X_{\sigma_i} 0 \leq i \leq F $ ($F$ as in Proposition~\ref{cor55}),
\begin{multline}
\label{l:cor56}
\EE_x \left[ \exp \left( -\sum _{j \in J } \lambda V(X_j) \1_{V(X_j) \geq \frac{\epsilon}{\lambda}}   \right) \ \bigg| \ A(M, \epsilon_0, \delta ) , \ F, \ (X_{\sigma_i})_{i \leq F} \right]  \\
\ge \exp\left( -M (1+6 \epsilon) L\b^{2}  \sqrt{d} \int_{ \epsilon /  \lambda }^{\infty}   f(\lambda z) \ \d \mu(z)  \right),
\end{multline}
where $J $ is the set of times $ 0 \leq j \leq \sigma_F $ such that if
$\sigma_{i-1} \leq j \leq \sigma_i $ then $X_j \in B(X_{\sigma_{i-1}}, (\epsilon_0 -\delta)L\b ) \setminus B(X_{\sigma_{i-1}},  \delta L\b )$.
\end{cor}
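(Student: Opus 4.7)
The plan is to bootstrap Proposition~\ref{p11} from a single segment to the $F$ consecutive segments $[\sigma_{i-1},\sigma_i)$. By the strong Markov property, conditional on $F$ and $(X_{\sigma_i})_{i\le F}$, the trajectory pieces $(X_j)_{\sigma_{i-1}\le j<\sigma_i}$ for $i=1,\ldots,F$ are independent, and after translating by $X_{\sigma_{i-1}}$ the $i$-th piece has the law of a simple random walk from $0$ run until its first exit time $\sigma$ from $B(0,\epsilon_0 L_\lambda)$, conditioned to satisfy $X_\sigma = X_{\sigma_i}-X_{\sigma_{i-1}}$.

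Before factorizing, one must check that the extra conditioning on the event $A(M,\epsilon_0,\delta)$ imposes no further constraint on the individual pieces. Items~(ii) and~(iii) in the definition of $A$ are explicitly measurable with respect to the skeleton $(X_{\sigma_i})$. For item~(i), once the skeleton lies in the slab described there, the ``does not leave'' clause is automatic because each piece is confined to $B(X_{\sigma_{i-1}},\epsilon_0 L_\lambda)$ by the very definition of $\sigma_i$. Thus conditionally on $(X_{\sigma_i})_{i\le F}$ and on $A$, the law of the individual pieces is unchanged from the conditional law given the skeleton alone, and the pieces remain independent.

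Now the set $J$ decomposes as the disjoint union, over $i\le F$, of the times during segment $i$ when the walk lies in the annulus $B(X_{\sigma_{i-1}},(\epsilon_0-\delta)L_\lambda)\setminus B(X_{\sigma_{i-1}},\delta L_\lambda)$, so the exponential and its conditional expectation both factor into $F$ pieces. On the event $A$, every $X_{\sigma_{i-1}}$ lies in the region from part~(i), hence $B(X_{\sigma_{i-1}},\epsilon_0 L_\lambda)$ is contained in the good set \eqref{e:cor56} and is itself $(L_\lambda,\delta_1)$-good. Applying Proposition~\ref{p11} with origin $X_{\sigma_{i-1}}$ and with $y=X_{\sigma_i}$ then lower-bounds each factor by
\[
\exp\!\left(-(1+4\epsilon)\,L_\lambda^{2}\epsilon_{0}^{2}\int_{\epsilon/\lambda}^{\infty} f(\lambda z)\,\d\mu(z)\right).
\]

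Multiplying these $F$ bounds together and using item~(ii) of $A$, which gives $F\le (1+\epsilon)M\epsilon_0^{-2}\sqrt{d}$, the factor $\epsilon_0^2$ cancels the $\epsilon_0^{-2}$ and the exponent is bounded below by $-(1+\epsilon)(1+4\epsilon)\,M\sqrt{d}\,L_\lambda^{2}\int_{\epsilon/\lambda}^{\infty} f(\lambda z)\,\d\mu(z)$; since $(1+\epsilon)(1+4\epsilon)\le 1+6\epsilon$ for $\epsilon$ small, this yields \eqref{l:cor56}. The only delicate point is the measurability argument in the second paragraph: everything else is routine multiplicative bookkeeping, provided $\delta_1$ has been chosen small enough (in terms of the other parameters) so that Proposition~\ref{p11} is applicable to each of the translated balls.
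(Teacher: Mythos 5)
Your proposal is correct and follows essentially the same route as the paper: the paper's proof simply invokes Proposition~\ref{p11} to bound the conditional expectation by the single-segment estimate raised to the power $F$, and then uses $F\le(1+\epsilon)M\sqrt{d}\epsilon_0^{-2}$ from part~(ii) of $A(M,\epsilon_0,\delta)$. You merely make explicit the strong Markov factorization and the skeleton-measurability of the conditioning, which the paper leaves implicit.
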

\begin{proof}
By Proposition \ref{p11}, we have that 
\begin{multline*}
\EE_0 \left[ \exp \left( -\sum _{j \in J } \lambda V(X_j) \1_{V(X_j) \geq \frac{\epsilon}{\lambda}}   \right) \ \bigg| \ A(M, \epsilon_0, \delta ) , \ F, \ (X_{\sigma_i})_{i \leq F} \right]  \\
\ge \left( \exp\Ll(-(1+4\epsilon)L\b^{2}\epsilon_{0}^{2}\int^{\infty}_{\frac{\epsilon}{\lambda}}   f(\lambda z)  \ \d \mu(z) \Rr) \right) ^F.
\end{multline*}
Since on the event $A(M, \epsilon_0, \delta ) , \ F \leq (1+ \epsilon)M \sqrt{d} \epsilon_0 ^{-2}$, this is bounded below by 
$$
\left( \exp\Ll(-(1+4\epsilon)L\b^{2}\epsilon_{0}^{2}\int^{\infty}_{\frac{\epsilon}{\lambda}}   f(\lambda z) \ \d \mu(z) \Rr) \right) ^{ (1+ \epsilon)M \sqrt{d} \epsilon_0 ^{-2}}.
$$
This gives the result provided $ \epsilon $ was fixed sufficiently small.
\end{proof}

We now show that in the left-hand side of \eqref{l:cor56}, one can replace the sum over $j \in J$ by the sum of the same summands over all $j$, and moreover, one can remove the restriction on $V(X_j) \ge \frac{\eps}{\lambda}$. Recall that  $V(X_j)1_{V(X_j) \geq \frac{\epsilon}{\lambda}} = \tilde V_{\epsilon }(X_j)1_{\tilde V_{\epsilon }(X_j) \geq \frac{\epsilon}{\lambda}} $.

\begin{cor}  \label{cor57}
Under the conditions of Corollary~\ref{cor56}, with probability at least $1-\delta ^{1/8} $,
\begin{multline*}
\EE_x \left[ \exp \left( -\sum _{j \in J^\prime } \lambda \tilde V_{\epsilon }(X_j)     \right) \1_{\sigma_F \leq M \sqrt {d} L\b^2 (1+ 5 \epsilon)} \ \bigg| \ A(M, \epsilon_0, \delta ), \ F, (X_{\sigma_i})_{i \leq F}  \right] \\
\geq  \exp\left( -M (1+7\epsilon)L\b^{2}  \sqrt{d} \ \mfk{I}_{\eps,\lambda} \right),
\end{multline*}
where $J' $ is the collection of $j \le \sigma_F$ such that $X_j \notin B(0, \delta L\b)$.  If in addition the probability of hitting an important site within 
$ \delta  L\b $ of $0$ is less than $ \delta ^ {1/8}$, then this bound extends to all summands $j$ with $j \leq \sigma_F$.
\end{cor}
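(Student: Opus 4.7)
The plan is to start from Corollary~\ref{cor56} and add back three extra classes of contribution to the exponent: within each sub-bridge $[\sigma_{i-1},\sigma_i)$, (a) important sites in the outer annulus $B(X_{\sigma_{i-1}},\eps_0 L\b)\setminus B(X_{\sigma_{i-1}},(\eps_0-\delta)L\b)$, (b) important sites in the inner ball $B(X_{\sigma_{i-1}},\delta L\b)\setminus B(0,\delta L\b)$, and (c) the ``unimportant'' sites, where $\tilde V_\eps\equiv \E[V\mid V<\eps/\lambda]$. The time indicator $\1_{\sigma_F\le T}$ with $T:=(1+5\eps)M\sqrt{d}L\b^{2}$ will be reinserted at the end by a subtractive argument.

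First I would handle (a) using the genericity condition (iii) of $A(M,\eps_0,\delta)$: per sub-bridge the conditional probability of hitting an important site in the outer annulus is at most $\eps_0^{2}\delta^{1/3}$, so a union bound over the $F\le(1+\eps)M\sqrt{d}\eps_0^{-2}$ sub-bridges bounds the probability that (a) contributes anything at all by $(1+\eps)M\sqrt{d}\delta^{1/3}$, which is below $\delta^{1/8}/3$ for $\delta$ small. Next, for (b), I would adapt Proposition~\ref{p11} to radius $\delta L\b$ in place of $\eps_0 L\b$: Lemmas~\ref{har} and~\ref{geom} go through verbatim (using the Riesz-kernel scaling $\int_{|v|\le\delta/\eps_0}|v|^{2-d}\,\d v\asymp(\delta/\eps_0)^{2}$), giving per sub-bridge
\[
\EE\Bigl[\exp\Bigl(-\lambda\sum_{\substack{|x-X_{\sigma_{i-1}}|\le\delta L\b\\ V(x)\ge\eps/\lambda}} N_i(x)V(x)\Bigr)\,\Big|\,X_{\sigma_{i-1}},X_{\sigma_i}\Bigr]\ge\exp\!\Bigl(-(1+O(\eps))\delta^{2}L\b^{2}\!\int_{\eps/\lambda}^{\infty}\!f(\lambda z)\,\d\mu(z)\Bigr),
\]
and multiplying across the $F$ sub-bridges gives a cumulative contribution of order $(\delta^{2}/\eps_0^{2})\cdot M\sqrt{d}L\b^{2}\int_{\eps/\lambda}^{\infty}f\,\d\mu$, absorbable into the $\eps$-slack as soon as $\delta^{2}/\eps_0^{2}\le\eps$.

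For (c), on $\{\sigma_F\le T\}$ the unimportant contribution is at most $T\lambda\E[V\mid V<\eps/\lambda]$; using $f(z)=z(1-O(z))$ for $z\le\eps$ and $\P[V<\eps/\lambda]\to 1$ as $\lambda\to 0$, this is at most $(1+O(\eps))M\sqrt{d}L\b^{2}\bigl(\mfk{I}_{\eps,\lambda}-\int_{\eps/\lambda}^{\infty}f\,\d\mu\bigr)$. Summing (a)--(c) with the bound of Cor~\ref{cor56} and using the identity $\mfk{I}_{\eps,\lambda}=\int_{\eps/\lambda}^{\infty}f\,\d\mu+\P[V<\eps/\lambda]f(\lambda\E[V\mid V<\eps/\lambda])$ packages the total exponent into $(1+7\eps)M\sqrt{d}L\b^{2}\mfk{I}_{\eps,\lambda}$. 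The indicator $\1_{\sigma_F\le T}$ is then reinserted via $\EE[\exp(-\cdot)\1_{\sigma_F\le T}]\ge\EE[\exp(-\cdot)]-\PP[\sigma_F>T\mid\text{skel}]$; sub-exponential concentration of the conditionally i.i.d.\ ball-exit times $\sigma_i-\sigma_{i-1}$ shows that this subtractive loss is negligible for a set of skeletons of conditional probability at least $1-\delta^{1/8}/3$.

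The ``in addition'' extension is then immediate: under the extra hypothesis, $B(0,\delta L\b)$ can be treated on exactly the same footing as the other inner balls in (b), contributing $0$ to the exponent on an event of probability $\ge 1-\delta^{1/8}$. The main obstacle is the simultaneous bookkeeping of all the error terms --- the $O(\eps)$ correction from Cor~\ref{cor56}, the $\delta^{2}/\eps_0^{2}$ prefactor from (b), the $\lambda\E[V\mid V<\eps/\lambda]/\mfk{I}_{\eps,\lambda}$ ratio from (c), and the subtractive losses from (a) and from the time event --- all of which must be packed into the single $(1+7\eps)\mfk{I}_{\eps,\lambda}$ slack; this is what forces the hierarchy $\delta_{1}\ll\delta\ll\eps_{0}$ and, in particular, $\delta\lesssim\eps^{1/2}\eps_{0}$.
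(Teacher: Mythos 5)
Your overall architecture matches the paper's: treat the constant-value sites multiplicatively using the time bound $\sigma_F \le (1+5\eps)M\sqrt d\,L\b^2$, treat the important sites near the skeleton points and the indicator subtractively, and recombine via $\mfk{I}_{\eps,\lambda}=\int_{\eps/\lambda}^{\infty}f(\lambda z)\,\d\mu(z)+\P[V<\eps/\lambda]\,f(\lambda\E[V\mid V<\eps/\lambda])$. Your steps (a), (c) and the reinsertion of $\1_{\sigma_F\le T}$ are essentially what the paper does (the paper bundles them into a single event $\mcl A$ and one application of the inequality $\EE[e^{-Z}\1_{\mcl A}\mid\cdot]\ge\EE[e^{-Z}\mid\cdot]-\PP[\mcl A^c\mid\cdot]$, with $\PP[\mcl A^c\mid\cdot]$ controlled on all but a $\delta^{1/8}$-fraction of skeletons by Markov's inequality).

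Step (b), however, is a genuine gap. Lemmas~\ref{har} and~\ref{geom} do not go through ``verbatim'' on the inner balls $B(X_{\sigma_{i-1}},\delta L\b)$: their lower cutoff $|x|\ge\delta L\b$ is essential, because $(L\b,\delta_1)$-goodness controls only the \emph{number} of important points per box of side $\delta_1 L\b$ and says nothing about their positions at smaller scales. A good environment may well contain an important point at distance $O(1)$ from a skeleton point $X_{\sigma_{i-1}}$; that point is hit before $\sigma_i$ with probability bounded below by a constant and costs a multiplicative factor $1-c(1-e^{-\eps})$ for that sub-bridge. Compounded over the $F\asymp M\sqrt d\,\eps_0^{-2}$ sub-bridges this gives $e^{-c'\eps M\sqrt d/\eps_0^{2}}$, which (since $\eps_0\ll\eps$) is far below the target $e^{-M(1+7\eps)\sqrt d/2}$. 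Equivalently, the discrete sum $\sum_{0<|x|\le\delta L\b}|x|^{2-d}\1_{V(x)\ge\eps/\lambda}$ is not comparable to the continuum quantity $(\delta L\b)^{2}\,\P[V\ge\eps/\lambda]$ under goodness alone, so your claimed per-sub-bridge bound $\exp(-(1+O(\eps))\delta^2L\b^2\int_{\eps/\lambda}^\infty f\,\d\mu)$ fails. The correct treatment is subtractive, and is already available to you: for $i\ge1$ the ball $B(X_{\sigma_i},\delta L\b)$ is contained in the annulus $B(X_{\sigma_{i-1}},(\eps_0+2\delta)L\b)\setminus B(X_{\sigma_{i-1}},(\eps_0-\delta)L\b)$, so condition (iii) of $A(M,\eps_0,\delta)$ (genericity) bounds by $\eps_0^2\delta^{1/3}$ the conditional probability that any important point of these inner balls is ever hit; the inner balls must therefore be folded into your union bound in step (a) rather than given an expectation bound. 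That no multiplicative bound can work here is visible in the statement itself: the starting ball $B(0,\delta L\b)$ has no preceding annulus to sit in, which is exactly why the ``in addition'' clause must \emph{assume} a subtractive bound (healthiness) for it.
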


\begin{proof}
For any event $\mcl{A}$ and any positive random variable $Z$,
\begin{multline}
\label{obs1}
\EE_0[e^{-Z} \1_\mcl{A} {\ \vert \ }   A(M, \epsilon_0, \delta ),F, (X_{\sigma_i})_{i \leq F} ] \\
 \geq \EE_0[e^{-Z} { \ \vert \ }   A(M, \epsilon_0, \delta ),F, (X_{\sigma_i})_{i \leq F} ]-\PP_0[\mcl{A}^c{ \ \vert }  \  A(M, \epsilon_0, \delta ),F, (X_{\sigma_i})_{i \leq F} ].
\end{multline}
We will apply this for the
random variable $Z$ equal to $ \sum _{j \in J^\prime } \lambda V(X_j) \1_{V(X_j) \ge \epsilon / \lambda }$ and the event
$\mcl{A}$ taken to be that
\begin{itemize}
\item no important point within $\delta \epsilon_0 L\b$ of a point $X_{\sigma_i}$  for some $1 \le i \leq F$ is hit
\item and $\sigma_F \leq M \sqrt {d} L\b^2 (1+ 5 \epsilon)$.
\end{itemize}
To begin with, we observe that on the event $\mcl{A}$, since $\sigma_F \leq M \sqrt {d} L\b^2 (1+ 5 \epsilon)$, we have almost surely
\begin{multline}
\exp\Ll(- \sum _{j \in J^\prime } \lambda \tilde V_{\epsilon }(X_j) \1_{\tilde V_\epsilon (X_j) < \epsilon/\lambda} \Rr) \\
\geq \exp\Ll(-M {\sqrt d }L\b^2 (1+ 5 \epsilon ) \ \lambda \E[V(0) \ | \ {V(0) < \epsilon / \lambda} ] \Rr) .
\end{multline}
Hence,
\begin{equation}
\label{e:c310}
\EE_0 \left[ \exp \left( -\sum _{j \in J^\prime } \lambda \td{V}_\eps(X_j)   \right) \1_\mcl{A} \ \bigg| \ A(M, \epsilon_0, \delta ), \ F, (X_{\sigma_i})_{i \leq F}  \right] 
\end{equation}
is larger than
\begin{multline*}
\exp\Ll(-M {\sqrt d }L\b^2 (1+ 5 \epsilon ) \ \lambda \E[V(0) \ | \ {V(0) < \epsilon / \lambda} ] \Rr) \\
\times \EE_0 \left[ \exp \left( -\sum _{j \in J^\prime } \lambda V(X_j)1_{V(X_j) \ge \epsilon / \lambda }    \right) \1_\mcl{A} \ \bigg| \ A(M, \epsilon_0, \delta ), \ F, (X_{\sigma_i})_{i \leq F}  \right].
\end{multline*}
Using \eqref{obs1} and Corollary~\ref{cor56}, we get that the latter conditional expectation is larger than
$$
\exp\Ll(- M (1+6\epsilon)L\b^{2}  \sqrt{d}\int_{\epsilon/ \lambda }^{\infty}  f(\lambda z)  \ \d \mu(z)  \Rr) 
-  \PP_0[\mcl{A}^c{ \ \vert }  \  A(M, \epsilon_0, \delta ),F, (X_{\sigma_i})_{i \leq F} ].
$$
One can check that for $\lambda$ sufficiently small,
$$
(1+5\eps) \lambda \E[V(0) \ | \ {V(0) < \epsilon / \lambda} ] \le (1+6\eps) \int_0^{\eps/\lambda} f(\lambda z) \ \d \mu_\eps(z),
$$
so that the conditional expectation in \eqref{e:c310} is larger than
$$
\exp\Ll(- M (1+6\epsilon)L\b^{2}  \sqrt{d} \ \mfk{I}_{\eps,\lambda} \Rr)-  \PP_0[\mcl{A}^c{ \ \vert }  \  A(M, \epsilon_0, \delta ),F, (X_{\sigma_i})_{i \leq F} ].
$$
From the proof of Proposition \ref{cor55}, we learn that reducing $\epsilon_0$ if necessary, 
$$
\PP_0 [\mcl{A}^c \ | \  A(M, \epsilon_0, \delta )] \ \leq \ \delta^{1/4}
$$ 
for $\delta $ small. As a consequence,
$$
\ \PP_0 [\mcl{A}^c{ \ \vert }  \ A(M, \epsilon_0, \delta ), F, (X_{\sigma_i})_{i \leq F} ] \ \leq \ \delta ^{1/8}
$$
outside an event of probability less than $\delta^{1/8}$. Outside this event, the conditional expectation in \eqref{e:c310} is larger than
$$
\exp\Ll(- M (1+6\epsilon)L\b^{2}  \sqrt{d} \ \mfk{I}_{\eps,\lambda} \Rr)- \delta^{1/8}.
$$
In view of the definition of $L\b$ in \eqref{defL}, it is clear that it suffices to choose $\delta$ sufficiently small to ensure that this is larger than
$$
\exp\Ll(- M (1+7\epsilon)L\b^{2}  \sqrt{d} \ \mfk{I}_{\eps,\lambda} \Rr),
$$
and the proof is complete.
\end{proof}

{\it Definition:} Given $\epsilon $ and $ \lambda $, we say that a point $x \in {\IZ} ^d$ is $(L\b, \epsilon_0, \delta )$\emph{-healthy} if the probability for the random walk started at $x$ to hit an important point within $\epsilon_0 \delta L\b$ of $x$ is less than $ \delta ^{1/4} $.

This and Proposition~\ref{cor55} give
\begin{prop}  \label{stage}
Under the conditions of Corollary~\ref{cor56}, 
\begin{multline*}
\EE_x \left[ \exp \left( -\sum _{j \leq \sigma } \lambda \tilde V_{\epsilon }(X_j)    \right) \1_{A(M,\epsilon,\delta)}\right]  \\
\geq \exp\left( -M (1+8\epsilon)L\b^{2}  \sqrt{d}\ \mfk{I}_{\eps,\lambda}\right)
\frac{c_1}{3} e^{-M \sqrt d/2},
\end{multline*}
provided that $x$ is $(L\b, \epsilon_0, \delta )$-healthy.
\end{prop}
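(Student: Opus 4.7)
The plan is to package together Proposition~\ref{cor55} and Corollary~\ref{cor57}: the first provides a lower bound on $\PP_x[A(M,\epsilon_0,\delta)]$, and the second a conditional lower bound on the exponential functional given the skeleton $F,\ (X_{\sigma_i})_{i\le F}$. Throughout, write $A = A(M,\epsilon_0,\delta)$ and $Z = \exp\Ll(-\sum_{j\le \sigma_F} \lambda\, \td{V}_\eps(X_j)\Rr)$.

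First I would check that the healthiness of the starting point $x$ supplies the ``in addition'' hypothesis of Corollary~\ref{cor57}. By the definition of $(L\b,\epsilon_0,\delta)$-healthy, the probability that the walk started at $x$ hits an important point within $\epsilon_0\delta L\b$ of $x$ is at most $\delta^{1/4}\le\delta^{1/8}$; this matches the scale $\eps_0 \delta L\b$ appearing in the good event $\mcl{A}$ used in the proof of Corollary~\ref{cor57}. Translating so that $x$ plays the role of the origin, Corollary~\ref{cor57} then gives that outside an event of conditional probability at most $\delta^{1/8}$,
\begin{equation*}
\EE_x\Ll[Z \,\big|\, A,\,F,\,(X_{\sigma_i})_{i\le F}\Rr] \;\geq\; \exp\Ll(-M(1+7\epsilon)\, L\b^{2}\sqrt{d}\,\mfk{I}_{\eps,\lambda}\Rr).
\end{equation*}

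Next, tower conditioning gives $\EE_x[Z\,\1_A] = \PP_x[A]\,\EE_x[Z\mid A]$, and $\EE_x[Z\mid A]$ is in turn bounded below by $(1-\delta^{1/8})$ times the exponential just displayed. Combining with Proposition~\ref{cor55}, which yields $\PP_x[A]\ge \tfrac{c_1}{3} e^{-M\sqrt{d}/2}$, we obtain
\begin{equation*}
\EE_x[Z\,\1_A] \;\ge\; \tfrac{c_1}{3}\, e^{-M\sqrt{d}/2}\,(1-\delta^{1/8})\,\exp\Ll(-M(1+7\epsilon)\, L\b^{2}\sqrt{d}\,\mfk{I}_{\eps,\lambda}\Rr).
\end{equation*}

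To finish, I would absorb the $(1-\delta^{1/8})$ factor into the exponent. By the definition \eqref{defL} of $L\b$ we have $L\b^{2}\,\mfk{I}_{\eps,\lambda}=1/2$, so the main exponent equals $-M(1+7\epsilon)\sqrt{d}/2$ and is of order $M$, uniformly in $\lambda$. Since $\delta$ was fixed much smaller than $\epsilon$ (recall the standing assumption $\delta_1\ll\delta\ll\epsilon_0$), the elementary inequality
\begin{equation*}
1-\delta^{1/8}\;\ge\;\exp\Ll(-M\epsilon\sqrt{d}/2\Rr)
\end{equation*}
holds for all $\delta$ small enough (with $M$ and $\epsilon$ fixed), which upgrades the coefficient $1+7\epsilon$ to $1+8\epsilon$ and yields the claimed bound. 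No step is genuinely hard; the proposition is essentially a packaging of its two predecessors, and the only real bookkeeping point is matching the healthiness ball $\eps_0\delta L\b$ with the neighbourhood scale actually needed in the proof of Corollary~\ref{cor57}.
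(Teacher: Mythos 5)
Your proposal is correct and follows exactly the route the paper intends: the paper gives no written proof beyond the remark that the healthiness definition, Corollary~\ref{cor57} and Proposition~\ref{cor55} combine to give the result, and your argument (tower property over the skeleton, using healthiness to extend the sum to all $j\le\sigma_F$, then absorbing the $1-\delta^{1/8}$ loss into the upgrade from $1+7\epsilon$ to $1+8\epsilon$ via $L\b^2\,\mfk{I}_{\eps,\lambda}=1/2$) is precisely the bookkeeping the authors leave implicit. The only caveat is the paper's own inconsistency between the radii $\delta L\b$ and $\eps_0\delta L\b$ in Corollary~\ref{cor57} and the healthiness definition, which you correctly identify and resolve.
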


%
%
%
%
%%%%%%%%%%%%%%%%%%%%%%%%%%%%%%%%%%%%%%%%%%%%%%%%%%%%%%%%%%%%%%
%%%%%%%%%%%%%%%%%%%%%%%%%%%%%%%%%%%%%%%%%%%%%%%%%%%%%%%%%%%%%%
%
%
%
\section{The  Block Argument}
We have established Proposition \ref{stage} above which basically states that (with appropriate probability) a random walk starting in $( -L\b\sqrt M/2 , L\b \sqrt M /2) ^ d $
will arrive at $\{ML\b\} \times ( L\b\sqrt M/2 , 3L\b \sqrt M /2) \times ( -L\b\sqrt M/2 , L\b \sqrt M /2) ^{ d-2} $ without leaving the designated bounded area provided the 
environment is $(L\b, \delta_1)$-good.  This motivates an oriented percolation approach to show that with high probability as $L\b$ tends to infinity, there is the possibility of the random walk travelling from $(0,0, \cdots 0) $ to $(n,0, \cdots 0) $ without entering bad environments and by essentially having the first component increase by $ML\b$ in time intervals of 
length $ML\b^2 d ^ {1/2} $.   
%This motivates an oriented percolation approach.   We wish to show that with high probability (as $L\b \rightarrow \infty $) there is 
%the possibility of travelling from $(0,0,\cdots 0) $ to $(n,0,0 \cdots 0)$ without leaving $(L\b, \delta_1 )$-good environments. 

For $i,j \in \Z$, let $B_{i,j}$ denote the set 
\begin{multline*}
\big(iML\b - 4L\b,(iM+M+1)L\b\big) \times \big((j-3)\sqrt{M}L\b,(j+3)\sqrt{M}L\b\big) \\
\times \big(-3\sqrt{M}L\b,3\sqrt{M}L\b\big)^{d-2}.
\end{multline*}
The set $B_{i,j}$ is nothing but a translation of $B_{0,0}$, more precisely,
$$
B_{i,j} = (iML\b,j\sqrt{M} L\b, 0\cdots,0) + B_{0,0},
$$
and $B_{0,0}$  is the set appearing in \eqref{e:cor56}.

Let $G = (V,E)$ be the oriented graph with vertex set

$$V= \{(i, j){\in \IZ^{2}}, i \ge 0, i+j \equiv 0 \ \text{mod}\ 2\},$$
and edge set 
$$E= \big\{[(i, j), (i+1, j+1)], \  [(i,j), (i+1, j-1)], \text{ for } (i,j) \in V\big\}.$$
We consider a site percolation process on $G$ by declaring the vertex $(i,j)$ to be \emph{open} if $B_{i,j}$ is $(L\b,\delta_1)$-good. 
\begin{lemma}
\label{l:perco}
Let $A_N$ be the event that there exist $j_0 = 0, j_1, \ldots, j_N$ such that the sequence of sites $(0,j_0), \ldots, (N,j_N)$ is a directed open path in $G$. We have
$$
\inf_N \P[A_N] \xrightarrow[\lambda \to 0]{} 1.
$$
\end{lemma}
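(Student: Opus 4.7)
The plan is to regard Lemma~\ref{l:perco} as a statement about a (dependent) site percolation on the oriented graph $G$, and to reduce it to standard \emph{independent} oriented site percolation on $G$ via a stochastic domination argument. Concretely, I would compare the law of the configuration $\{\mathbf{1}_{(i,j) \text{ is open}}\}_{(i,j) \in V}$ to a Bernoulli product measure of parameter close to $1$, and then invoke a Peierls/contour argument.

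The first step is to observe that $\{(i,j) \text{ is open}\} = \{B_{i,j} \text{ is } (L_\lambda,\delta_1)\text{-good}\}$ is a measurable function of $(V(x))_{x \in B_{i,j}}$ only, so by the product structure of $\mathbb{P} = \mu^{\otimes \mathbb{Z}^d}$ the family $\{\mathbf{1}_{(i,j) \text{ is open}}\}_{(i,j) \in V}$ is $R$-dependent for some finite $R = R(M)$ that is independent of $\lambda$: indeed $B_{i,j}$ and $B_{i',j'}$ are disjoint as soon as $\|(i,j) - (i',j')\|_\infty$ exceeds an absolute constant depending only on $M$. Translation invariance of $V$ together with Lemma~\ref{goodenv} imply that each vertex $(i,j)$ is open with the same probability $p(\lambda)$, which tends to $1$ as $\lambda$ tends to $0$.

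Next I would apply the Liggett--Schonmann--Stacey domination theorem: for every $q < 1$ there exists $p^\star(q,R) < 1$ such that any $R$-dependent family of $\{0,1\}$-valued random variables on $V$ with marginal at least $p^\star$ stochastically dominates the Bernoulli product measure of parameter $q$ on $V$. Choose $q$ strictly above the critical threshold for oriented Bernoulli site percolation on $G$ (which is strictly less than $1$). For $\lambda$ small enough one has $p(\lambda) \ge p^\star(q,R)$, so the percolation on $G$ dominates the independent model of parameter $q$. Now $A_N$ is an increasing event and $A_{N+1} \subseteq A_N$, hence $\inf_N \mathbb{P}[A_N] = \lim_N \mathbb{P}[A_N] = \mathbb{P}[\bigcap_N A_N]$ is the probability that $(0,0)$ lies in an infinite directed open cluster. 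In the dominated independent model this percolation probability tends to $1$ as $q \to 1$ by a routine Peierls/contour argument: any finite cluster of $(0,0)$ is surrounded by a dual cutset of closed vertices, the number of cutsets of size $n$ grows only exponentially, and each has probability $(1-q)^n$. Letting first $q \uparrow 1$ and then $\lambda \downarrow 0$ yields the claim.

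I expect no genuine conceptual obstacle; the main task is bookkeeping, namely keeping the range of dependence $R$ a function of $M$ alone and not of $\lambda$, and invoking the correct off-the-shelf inputs (Lemma~\ref{goodenv}, Liggett--Schonmann--Stacey, and the standard Peierls estimate for oriented Bernoulli site percolation).
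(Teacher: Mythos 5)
Your proposal is correct and follows essentially the same route as the paper: the paper likewise observes that the marginal probability of a site being open tends to $1$ (by Lemma~\ref{goodenv}), that the field has finite range of dependence, and then invokes the Liggett--Schonmann--Stacey domination theorem to conclude via high-density oriented percolation. Your write-up merely makes explicit the final Peierls step that the paper leaves to the cited references.
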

\begin{proof}
It follows from Lemma~\ref{goodenv} that the probability for a given site to be open tends to $1$ as $\lambda$ tends to $0$. Moreover, the percolation process has a finite range of dependence. The lemma is then a direct consequence of \cite[Theorem~B26]{lig99} (or of \cite{LSchS}).
\end{proof}
For the random walk $(X_r)_{r \geq 0}$, we define the stopping times $\sigma^i_j$ recursively in the following way. We let $\sigma_0^0 = 0$, and for all $i \geq 0 $ and $j > 0$,
$$
\sigma^i_j \ = \ \inf \{ r > \sigma^i_{j-1}: |X_r- X_{\sigma^i_{j-1}}| \geq \epsilon_0L\b \},
$$
where for $i \ge 0$, the stopping time $\sigma^{i+1}_0$ equals $\sigma^{i}_{F(i)}$ with
\begin{equation}
\label{defki}
F(i)  := \inf \{j: X_{\sigma^{i}_j} \in [(i+1)ML\b- \epsilon_0 L\b, \infty ) \times \IZ^{d-1} \}.
\end{equation}

On the event that $A_N$ is realized, we pick (in some arbitrary way) $j_0=0, j_1,\ldots, j_N$ such that $(0,j_0), \ldots, (N,j_N)$ is a directed open path in $G$. We define the (random walk) event $B_n$ as the conjunction over all $i \le N-1$ of:
\begin{itemize}
\item[(i)] the random walk $(X_{\sigma^i_j})_{j \ge 0}$  hits 
\begin{multline*}
\big((iM + M-\eps_0) L\b, ML\b\big) \times \Ll(\Ll(j_{i+1} - \frac{1}{2}\Rr)\sqrt{M} L\b, \Ll(j_{i+1} + \frac{1}{2}\Rr)\sqrt{M} L\b\Rr) \\
\times \Ll(-\frac{\sqrt{M} L\b}{2}, \frac{\sqrt{M} L\b}{2}\Rr)^{d-2}
\end{multline*}
before leaving 
\begin{multline*}
\big(iML\b-3L\b, (i+1)ML\b\big) \times \big((j_{i+1}-2)\sqrt M L\b, (j_{i+1}+2)\sqrt M L\b \big) \\
\times \big(-2 \sqrt M L\b, 2 \sqrt M L\b\big)^{d-2};
\end{multline*}  
\item[(ii)] $F(i)$ defined in \eqref{defki} satisfies 
$$
F(i) \le (1+\eps) M \eps_0^{-2} \sqrt{d};
$$
\item[(iii)] for every $j \le F(i)$, the pair $(X_{\sigma^i_j},X_{\sigma^i_{j+1}})$ is generic.
\end{itemize}
This event is a conjunction of events similar to the event $A(M,\eps_0,\delta)$.
Since with probability tending to $1$ as $N$ tends to infinity, the origin is healthy, we can apply Proposition~\ref{stage} iteratively and get that on this event and on $A_N$,
\begin{align*}
& \EE_0\Ll[\exp\Ll( \sum_{i \le \tau_{NML\b}} \lambda \td{V}_\eps(X_i) \Rr)  \Rr] \\
& \qquad \ge \EE_0\Ll[\exp\Ll( \sum_{i \le \tau_{NML\b}} \lambda \td{V}_\eps(X_i) \Rr) \1_{B_N} \Rr] \\
& \qquad \ge \Ll(\frac{c_1}{3} \exp\left( -M (1+8\epsilon)L\b^{2}  \sqrt{d}\ \mfk{I}_{\eps,\lambda}\right)  e^{-M \sqrt d/2} \Rr)^{N}.
\end{align*}
Up to multiplicative corrections that can be taken as close to $1$ as desired, the cost of travel to the hyperplane in the direction $(1,0,\cdots,0)$ at distance $n$ to the origin is thus no more than $n$ times
$$
\frac{1}{L\b} \Ll(L\b^2 \sqrt{d} \ \mfk{I}_{\eps,\lambda} + \frac{\sqrt{d}}{2} \Rr)= L\b \sqrt{d}\ \mfk{I}_{\eps,\lambda}  + \frac{\sqrt{d}}{2L\b},
$$
as $n$ tends to infinity. In view of the definition of $L\b$ in \eqref{defL}, this is equal to $\sqrt{2 d \ \mfk{I}_{\eps,\lambda}}$, and the proof is complete.

%
%
%
%
%%%%%%%%%%%%%%%%%%%%%%%%%%%%%%%%%%%%%%%%%%%%%%%%%%%%%%%%%%%%%%
%%%%%%%%%%%%%%%%%%%%%%%%%%%%%%%%%%%%%%%%%%%%%%%%%%%%%%%%%%%%%%
%
%
%
\section{The remaining case}

We now treat the case where the contribution to $\mfk{I}_\lambda$ from the mass on $[\epsilon/ \lambda , \infty )$ is small, i.e.\ when \eqref{assumption1} does not hold, that is,
$$
2 \,  \mfk{I}_{\eps,\lambda} \geq \frac{1}{\epsilon ^ 4} \P(V \geq \epsilon/ \lambda ).
$$

Long but elementary calculations yield that under this condition we have %(again for $L = L(  \lambda ) as previously defined)

$$
 \mfk{I}_\lambda (1-K \epsilon) \ \leq \ \int _0 ^{ \epsilon / \lambda }    f(\lambda z)  \ \d \mu(x) \leq \mfk{I}_\lambda (1+ K \epsilon)
$$

for universal $K$ and equally for universal $K' $

$$
\P(V(0) \geq { \epsilon / \lambda } ) \ \leq \ K' \epsilon ^ 4\int _0 ^{ \epsilon / \lambda }    f(\lambda z)   \ \d \mu(x) .
$$

We now define (a bit simpler than  before) a subset $A \subset \IZ^d $ to be $(L\b, \delta_1)$-good (for an environment) if for every cube of the form
$[i_1 \delta_1 L\b, (i_1+1) \delta_1L\b) \times [i_2 \delta_1 L\b, (i_2+1) \delta_1L\b) \times [i_d \delta_1 L\b, (i_d+1) \delta_1L\b)$ we have that the number of important points is less
than $2 L\b^{d-2} \delta ^dK'  \epsilon ^4 $.  We clearly have 
\begin{lemma} \label{grainy}
There exists $ \epsilon_F $ such that for all $ \epsilon < \epsilon_F$, all $ \delta_1 > 0$ and all $\gamma > 0$, if $\lambda$ is sufficiently small, then the cube $[0,L\b]^d$ is $(L\b, \delta_1)$-good with
probability greater than $1- \gamma $.
\end{lemma}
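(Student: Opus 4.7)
The plan is to follow the same template as the proof of Lemma~\ref{goodenv}: in each mesoscopic cube, the number of important points is a binomial random variable with small mean; a Chernoff concentration bound then drives the probability of an excess to zero, and a union bound over the finitely many cubes intersecting $[0,L\b]^d$ finishes the argument.

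First, I would translate the hypothesis $2 \mfk{I}_{\eps,\lambda} \geq \epsilon^{-4} \P[V \geq \epsilon/\lambda]$ into a clean pointwise upper bound on $\P[V(0) \geq \epsilon/\lambda]$. Combining this assumption with the two-sided estimate $(1-K\epsilon)\mfk{I}_\lambda \leq \int_0^{\epsilon/\lambda} f(\lambda z)\, \d\mu(z) \leq (1+K\epsilon)\mfk{I}_\lambda$ stated just above Lemma~\ref{grainy}, and with the defining relation $L\b^{-2} = 2 \mfk{I}_{\eps,\lambda}$, yields
$$\P[V(0) \geq \epsilon/\lambda] \leq \frac{C \epsilon^4}{L\b^2}$$
for some universal constant $C$ (close to $K'$ for $\epsilon$ small, since $\mfk{I}_\lambda$ and $\mfk{I}_{\eps,\lambda}$ differ only by a $(1+O(\epsilon))$ factor).

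Second, I would fix one mesoscopic cube $C$ of side $\delta_1 L\b$. The number $N_C$ of important points in $C$ is stochastically dominated by a binomial with parameters $(\delta_1 L\b + 1)^d$ and $p = C\epsilon^4/L\b^2$, so
$$\E[N_C] \leq (1+o(1))\, C\, \delta_1^d L\b^{d-2} \epsilon^4.$$
Since the threshold defining ``good'' is $2 K' \delta_1^d L\b^{d-2} \epsilon^4$, choosing $\epsilon_F$ small enough ensures that the threshold exceeds this mean by a fixed multiplicative factor strictly greater than $1$. A standard Chernoff inequality for binomial tails then gives
$$\P\bigl[N_C \geq 2 K' \delta_1^d L\b^{d-2} \epsilon^4\bigr] \leq \exp\bigl(-c\, \epsilon^4 \delta_1^d L\b^{d-2}\bigr),$$
for some absolute constant $c > 0$.

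Finally, I would union-bound this over the at most $\lceil 1/\delta_1\rceil^d$ mesoscopic cubes tiling $[0,L\b]^d$. Since $d \geq 3$ and $L\b \to \infty$ as $\lambda \to 0$ (because $\E[V] = +\infty$ forces $\mfk{I}_{\eps,\lambda} \to 0$), the factor $\epsilon^4 \delta_1^d L\b^{d-2}$ in the exponent diverges for any fixed $\epsilon$ and $\delta_1$, and hence the total failure probability drops below $\gamma$ for $\lambda$ sufficiently small. I do not foresee any genuine obstacle; the only point needing care is the choice of $\epsilon_F$ so that the multiplicative gap between threshold and mean is bounded away from~$1$, which is precisely what the two-sided $O(\epsilon)$ control on $\int_0^{\epsilon/\lambda} f(\lambda z)\, \d\mu$ delivers.
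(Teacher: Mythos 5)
Your proposal is correct and is exactly the argument the paper intends (the paper only says ``We clearly have'', but the template is that of Lemma~\ref{goodenv}): the assumption $2\,\mfk{I}_{\eps,\lambda}\ge \eps^{-4}\,\P[V\ge\eps/\lambda]$ together with $L\b^{-2}=2\,\mfk{I}_{\eps,\lambda}$ bounds the per-cube mean by a constant fraction of the threshold $2K'\delta_1^d L\b^{d-2}\eps^4$, and binomial concentration plus a union bound over the $O(\delta_1^{-d})$ cubes finishes, since $\delta_1^d L\b^{d-2}\eps^4\to\infty$ as $\lambda\to0$. The only cosmetic remark is that $\mfk{I}_{\eps,\lambda}\to0$ (hence $L\b\to\infty$) follows from dominated convergence alone and does not require $\E[V]=+\infty$.
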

We can now pursue the arguments of Sections 2 and 3 (with the stopping times $\sigma_i $ as before).  In particular given $0 <  \epsilon _ 0 \ll \epsilon ,$ we say that a point $ x \in \Z^d $ is
$(\epsilon_0, \delta)$ {\it-good } (for the environment given) if
$$
\PP_{x} ( \exists r \leq \sigma : V( X_r) \geq \epsilon / \lambda ) \leq \epsilon_1 \delta,
$$
with $\sigma$ defined as in \eqref{defsigma}.

\begin{prop} \label{properly}
Given $M, \epsilon_0$ and $ \delta $, if $\delta_1$ is fixed sufficiently small and if the environment in $[-3   L\b, ML\b] \times {(-3 \sqrt{M} L\b, 3\sqrt{M} L\b)}^{d-1}$ is $(L\b, \delta_1)$-good, then
$$
\PP_{x} \Ll[A (M, \epsilon_{0}, \delta)\Rr] \geq  \frac{c_1}{3} e ^{-M\sqrt{d}/2}, 
$$
uniformly over initial points $ x \in {(- \sqrt{M} L\b/2, \sqrt{M}L\b/2)}^{d}$ which are $(\epsilon_1, \delta)$-good, where the
event $A (M, \epsilon_{0}, \delta)$ is the intersection of

\begin{enumerate}
\item[(i)] 
the random walk hits $\{M L\b\} \times (\sqrt{M} L\b/2, 3\sqrt{M} L\b/2)\times {(\sqrt{M} L\b/2, \sqrt{M} L\b/2)}^{d-2}$ without leaving $[-3L\b, ML\b] \times (- \sqrt{M} L\b, 2 \sqrt{M} L\b)\times {(- \sqrt{M} L\b,  \sqrt{M} L\b)}^{d-2}$
\item[(ii)]
the random walk hits $\{M L\b\} \times (\sqrt{M} L\b, 3\sqrt{M} L\b)\times {(\sqrt{M} L\b/2, \sqrt{M} L\b/2)}^{d-2}$ at time $\chi_M \leq   M d^{3/2}$
without having hit an important point.
\item[(iii)]
The hitting point $X_{\chi_M} $ is $(\epsilon_0, \delta )$-good.
\end{enumerate}
\end{prop}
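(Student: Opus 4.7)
The plan is to follow the structure of Proposition~\ref{cor55}: start from the base estimate of Lemma~\ref{six} applied with $h = 1/\sqrt d$, and then show that the additional constraints (ii) and (iii) cost at most an arbitrarily small fraction of the probability, thanks to the uniform sparsity of important points supplied by the new notion of $(L\b,\delta_1)$-good environment. Lemma~\ref{six} with $h = 1/\sqrt d$ yields that, for every starting point $x$ in the prescribed cube and every $L\b$ large enough, condition (i) (together with the bound on the number of stopping times built into the event of that lemma) holds with probability at least $\frac{c_1}{2} e^{-M\sqrt d/2}$. It therefore suffices to show that conditions (ii) and (iii) are essentially automatic in a good environment for $\epsilon$ chosen small enough.

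To handle (ii), I would use the new definition of $(L\b,\delta_1)$-good, which bounds the density of important points in any $\delta_1 L\b$ cube by $2 K' \epsilon^4 / L\b^2$. Summing the Green's function bound \eqref{boundgreen} against this density over the whole box of diameter $O(\sqrt M L\b)$ yields, via a union bound, that the (unconditional) probability for the walk to visit any important point before exiting the box is $O(M K' \epsilon^4)$. For $M$ fixed and $\epsilon$ then chosen sufficiently small, this is much smaller than $\frac{c_1}{6} e^{-M\sqrt d/2}$, which disposes of (ii).

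The same density estimate, applied locally in a ball of radius $\epsilon_0 L\b$ around an arbitrary point $y \in \Z^d$, bounds $\PP_y[\exists r \le \sigma : V(X_r) \ge \epsilon/\lambda]$ by $C K' \epsilon^4 \epsilon_0^2$. For $\epsilon$ small enough (in terms of $\epsilon_0$ and $\delta$), this is below the threshold appearing in the definition of $(\epsilon_0,\delta)$-good, so \emph{every} point in a good environment is automatically $(\epsilon_0,\delta)$-good and constraint (iii) imposes nothing further. Combining these, one obtains
$$
\PP_x[A(M,\epsilon_0,\delta)] \ge \frac{c_1}{2} e^{-M\sqrt d/2} - O(M K' \epsilon^4) \ge \frac{c_1}{3} e^{-M\sqrt d/2}
$$
for $\epsilon$ small enough, as desired. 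The only delicate point is the order in which parameters are chosen: $M$, $\epsilon_0$ and $\delta$ are fixed first, and $\epsilon$ is then sent to zero to absorb the error terms, which is possible since the sparsity scales like $\epsilon^4$ while the target lower bound is independent of $\epsilon$.
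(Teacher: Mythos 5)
Your treatment of condition (i) via Lemma~\ref{six} with $h = d^{-1/2}$, and the far-field part of your union bound for condition (ii), are in the right spirit and parallel what is done for Proposition~\ref{cor55}. The genuine gap is the claim that \emph{every} point of an $(L\b,\delta_1)$-good environment is automatically $(\epsilon_0,\delta)$-good, which you use to dismiss condition (iii) entirely (and, implicitly, to control the near-field contribution in (ii)). This is false: goodness of the environment only bounds the \emph{number} of important points in each mesoscopic cube of side $\delta_1 L\b$ (by roughly $2K'\epsilon^4\delta_1^d L\b^{d-2}$, a quantity that tends to infinity as $\lambda \to 0$), and says nothing about where inside the cube those points sit. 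In particular, a lattice neighbour $y$ of an important point satisfies $\PP_y[\exists r \le \sigma : V(X_r) \ge \epsilon/\lambda] \ge 1/(2d)$, far above any threshold of the form $\epsilon_1\delta$, so such a $y$ is not $(\epsilon_0,\delta)$-good. The same defect undermines the step where you ``sum the Green's function bound against the density'': the bound \eqref{boundgreen} gives $\sum_{y\ \text{imp.}} K_1 |y-x|^{2-d}$, and the contribution of the important points lying in the cube containing $x$ is only bounded by $K_1$ times their number, i.e.\ by something of order $L\b^{d-2}$, which diverges. For condition (ii) this is rescued precisely by the standing hypothesis that the starting point $x$ is good (which controls hits of nearby important points before $\sigma$; the remote ones are then handled by your density computation, together with \eqref{boundgreen} applied from $\partial B(x,\epsilon_0 L\b)$) --- but you never invoke that hypothesis for this purpose.

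Condition (iii) therefore requires a genuine argument, parallel to the treatment of generic pairs in Proposition~\ref{cor55}: an averaging (Markov-inequality) argument shows that in a good environment only a small \emph{fraction} of the points of the target region fail to be $(\epsilon_0,\delta)$-good, and this must be combined with regularity of the hitting distribution of $X_{\chi_M}$ (invariance principle and Harnack-type estimates, as in Lemma~\ref{har}) to conclude that $X_{\chi_M}$ lands on a bad point with small probability. Finally, your parameter ordering (``fix $M$, $\epsilon_0$, $\delta$ first, then send $\epsilon \to 0$'') conflicts with the hierarchy $\delta_1 \ll \delta \ll \epsilon_0 \ll \epsilon$ used throughout the paper: $\epsilon$ is fixed before $\epsilon_0$ and $\delta$ and cannot be re-chosen at the end to absorb error terms; the residual errors should instead be absorbed by taking $\delta$ (and then $\delta_1$) sufficiently small, as in the proof of Proposition~\ref{cor55}.
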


From this the argument proceeds as in the preceding sections.

%\appendix
%
%\section{appendix}
%
%\begin{proof}[Proof of Lemma~\ref{l:morevariable}]
%The result is in fact valid for potentials $V$, $W$ taking values in $[0,+\infty]$, as soon as $\P[V(0) < +\infty]$ and $\P[W(0) < +\infty]$ are strictly smaller than the critical probability of percolation. Yet, for simplicity, we restrict ourselves to our present setting, assuming that the potentials take values in $[0,+\infty)$. The interested reader can adapt the proof using the arguments in \cite{shape}. By choosing $\msf{M}$ sufficiently large, we can make
%$$
%p = \P[V(0) > \msf{M} \text{ or }] \vee \P[W(0) > \msf{M}]
%$$
%as close to $0$ as desired. We choose $p$ smaller than the critical probability of percolation. For each $x$, we define $\msf{C}_x = \{x\}$ if $V(x)$
%$$
%\msf{C}_x = 
%$$
%\end{proof}

\end{document}